\documentclass{amsart}

\newtheorem{theo}{Theorem}[section]

\newtheorem{lemma}[theo]{Lemma}
\newtheorem{corollary}[theo]{Corollary}

\newtheorem{theorem}[theo]{Theorem}

\theoremstyle{definition}

\theoremstyle{remark}
\newtheorem{remark}[theo]{Remark}
\newtheorem{example}[theo]{Example}

\usepackage{tikz}
\usetikzlibrary{cd}
\usepackage{here}
\usepackage{amssymb}
\usepackage{enumitem}
\usepackage{scalerel}
\usepackage{url}
\usepackage{cite}

\title{Links and singularities of stable maps from 3-manifolds to surfaces}

\author{Gakuto Kato}
\address{Graduate School of Integrated Basic Sciences, Nihon University, 3-25-40 Sakurajosui, Setagaya-ku, Tokyo 156-8550, JAPAN}
\email{chga24002@g.nihon-u.ac.jp}

\date{\today}

\begin{document}

\keywords{braid, link, stable map}

\subjclass[2020]{57R45, 57M99, 57K10}

\begin{abstract}

For any two links in the $3$-sphere, we provide a visual construction of a stable map $f$ from the $3$-sphere to the Euclidean plane such that $f$ has no cusp points, the set of definite (resp. indefinite) fold points of $f$ is isotopic to the first (resp. second) given link, and $f$ does not have a certain type of singular fibers. Moreover, we obtain a similar result for any $3$-manifold by setting the target space to the $2$-sphere.

\end{abstract}

\maketitle


\section{Introduction}
\label{s1}

In this paper, all manifolds and maps are assumed to be differentiable of class $C^\infty$ unless otherwise indicated.

There have been many studies focusing on the singularities of stable maps from a closed orientable $3$-manifold $M$ to the Euclidean plane $\mathbb{R}^{2}$. For each map, the set of its \textit{singular points} consists of points where the differential is not surjective. See \cite{Furutani-Koda, Hiratuka-Jorge-Saeki, Ishikawa-Koda, Kalmar-Stipsicz, K.-Levine-Port., Levin'65, Saeki1993, Saeki1994, Saeki'95, Saeki'96, Saeki2019, Saeki-Yamamoto2016, Saeki-Yamamoto2018, S.Naoki, Ichihara-K}, for example. In paticular, it was shown by Saeki \cite[Corollary 6.3]{Saeki'95, Saeki'96} that, for a closed orientable $3$-manifold $M$ and any given link $L$ in $M$, there exists a stable map $f : M \to \mathbb{R}^{2}$ such that the set of singular points, say $S(f)$, is equal to $L$ if and only if $[L]_{2}=0$ in $H_{1} (M;\mathbb{Z}_{2})$. For instance, it implies that every nonempty knot or link in the $3$-sphere $S^3$ is realized as the singular set $S(f)$ of a stable map $f : S^3 \to \mathbb{R}^{2}$. For such a stable map $f$, it is known that the points in $S(f)$ are classified into definite fold points, indefinite fold points, and cusp points. Further, it is known that singular fibers of $f$ containing two indefinite fold points are classified into \textit{type $\mathrm{I\hspace{-1.2pt}I^{2}}$} and \textit{type $\mathrm{I\hspace{-1.2pt}I^{3}}$} (Figure~\ref{types of sing. fibers}). See \cite{Levine1985, Levin'65}, for example.

    \begin{figure}[htbp]
        {\unitlength=1cm
        \begin{picture}(12.5,2)(0,0)
        \put(4,-0.5){\includegraphics[height=3cm,clip]{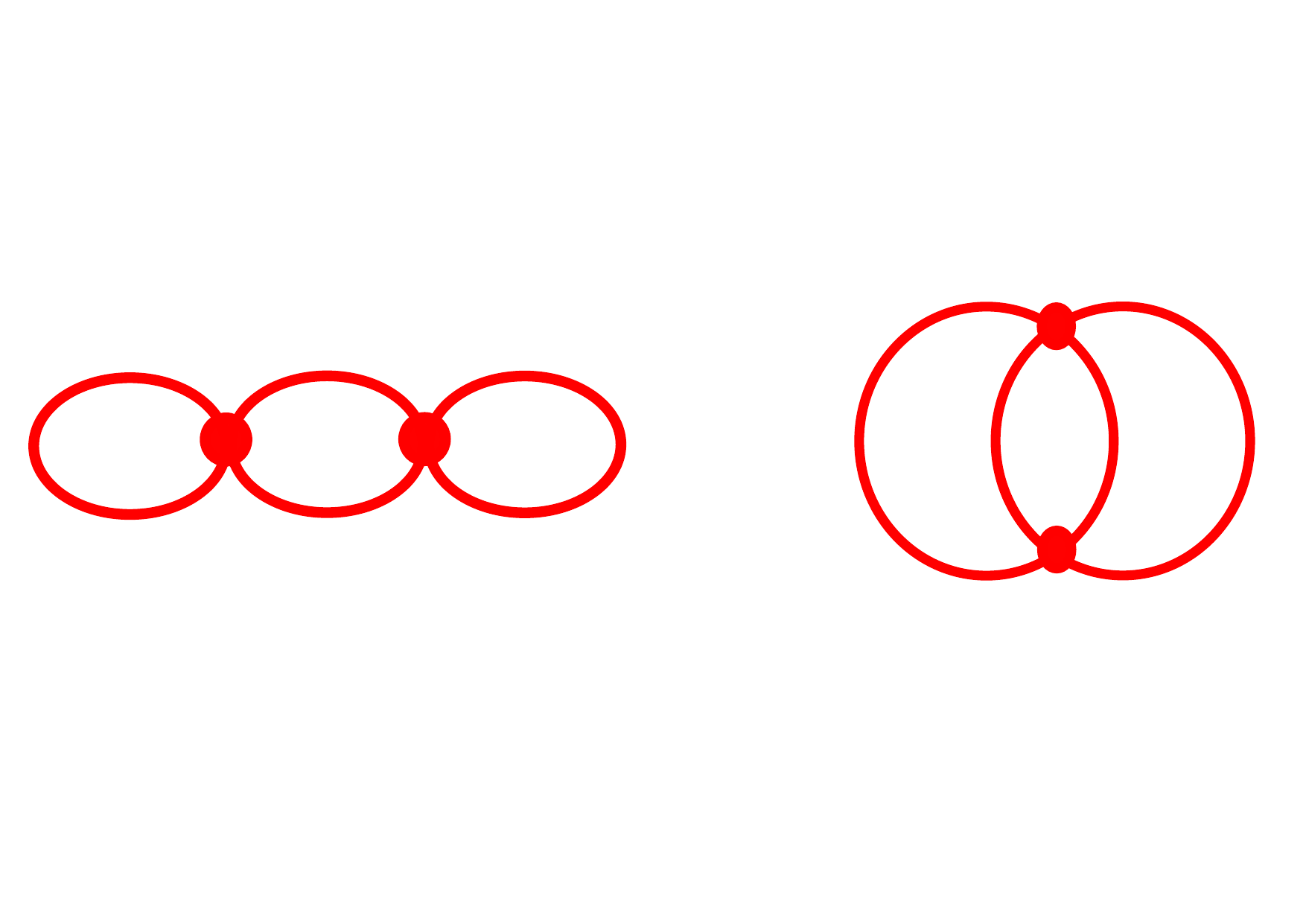}}
        \put(4.5,0.2){type $\mathrm{I\hspace{-1.2pt}I^{2}}$}
        \put(7,0.2){type $\mathrm{I\hspace{-1.2pt}I^{3}}$}
        \end{picture}}
        \caption{Types of singular fibers containing two indefinite fold points.}
        \label{types of sing. fibers}
    \end{figure}

In \cite{K2025ar}, the author obtains a result focusing on the set of definite fold points and singular fibers. Let $L$ be any link in $S^{3}$. Then, it is shown that there exists a stable map $f$ from $S^{3}$ to $\mathbb{R}^{2}$ such that $f$ has no cusp points, the set of definite fold points of $f$ is isotopic to $L$, and $f$ has no singular fibers of type $\mathrm{I\hspace{-1.2pt}I^{3}}$. As a continuation of the above work, in this paper, we give a result focusing on the set of indefinite fold points and singular fibers. Here, let $S_{0} (f)$ be the set of definite fold points of a stable map $f$ and $S_{1} (f)$ the set of indefinite fold points of $f$ from a $3$-manifold $M$ to a $2$-manifold. Note that it is known that $S_{0} (f)$ and $S_{1} (f)$ are $1$-dimensional submanifold of $M$.

\begin{theorem}
        \label{main1}
            Let $L$ be a link in $S^{3}$. Then, there exists a stable map $f : S^{3} \to \mathbb{R}^{2}$ such that $f$ has no cusp points and $S_{1}(f)$ is isotopic to $L$, and $f$ has no singular fibers of type $\mathrm{I\hspace{-1.2pt}I^{3}}$.
            \label{1}
\end{theorem}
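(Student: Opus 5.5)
We plan to realize $L$ as the indefinite fold locus by using a closed braid presentation. By Alexander's theorem, we may assume that $L$ is the closure of a braid $\beta$ on $n$ strands, so $L$ sits in a solid torus $V=D^{2}\times S^{1}$ standardly embedded in $S^{3}$ and is transverse to the disk fibers $D^{2}\times\{\theta\}$. Let $V'=S^{3}\setminus \mathrm{int}\,V$ be the complementary solid torus. The model is the standard map $S^{3}\to\mathbb{R}^{2}$ whose image is an annulus, with $S^{1}$-coordinate given by the angular coordinate $\theta$ of $V$ and radial coordinate measuring height. On each disk $D^{2}\times\{\theta\}$ we take a Morse function $h_{\theta}$ whose critical points in the interior are saddles at the $n$ points of $\beta\cap(D^{2}\times\{\theta\})$. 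Together with suitable maxima and minima, this makes $F(x,\theta)=(h_{\theta}(x),\theta)$, read in polar form, a map whose indefinite fold set is exactly the closed braid.

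The key steps are as follows. First, we fix a Morse function $h$ on $D^{2}$ (or on a sphere obtained after capping) with exactly $n$ saddles at the puncture points, arranged so that all saddles have \emph{distinct} values. Second, we realize each Artin generator $\sigma_{i}$ as a one-parameter family $h_{t}$ that moves two adjacent saddles around each other. In this family, the critical values of the two saddles must cross once; at the crossing moment the singular fiber contains two indefinite fold points. We arrange the passage so that the two saddles lie on different components of the level set, which gives type $\mathrm{I\hspace{-1.2pt}I^{2}}$, and we avoid type $\mathrm{I\hspace{-1.2pt}I^{3}}$, where both saddles sit on one connected figure. Concatenating the families for the letters of $\beta$, and using that the closed braid returns the configuration to itself, we obtain a loop of functions, and hence a map on $V$ with $S_{1}$ equal to the closed braid and no cusps. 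Third, on $V'$ we extend by a fold map with only definite folds, for instance the standard projection whose singular set is the core circle, as in \cite{K2025ar}. The two pieces glue along the torus because the boundary behavior of $h_{\theta}$ (a fixed definite fold circle near $\partial D^{2}$) is independent of $\theta$. Finally, we perturb slightly to ensure stability, which is generic and preserves the absence of cusps and of $\mathrm{I\hspace{-1.2pt}I^{3}}$ fibers.

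We expect the main obstacle to be the second step, namely constructing each exchange $\sigma_{i}^{\pm1}$ without cusps while keeping every double-saddle fiber of type $\mathrm{I\hspace{-1.2pt}I^{2}}$. To handle it, we would first order the saddles linearly along a fixed arc. Each saddle would be attached to its own ``local bump'', a pair consisting of a saddle and an extremum cancelled outside, giving separate level-set components. Rotating two such bumps around each other, with one at higher value than the other until they are far apart, keeps their critical level components disjoint. If this fails, we would use the freedom to insert an extra definite fold circle: this changes only $S_{0}$ and preserves the isotopy type of $S_{1}$.
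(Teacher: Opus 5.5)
Your global architecture is the paper's: a closed braid in a standard solid torus, Morse functions on the meridian disks with saddles at the braid points, one family per braid syllable, and the complementary solid torus filled in. But Step~2, which is the whole content of the proof, contains a misidentification and a genuine gap.

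\textbf{The fiber types.} If the two saddles lie on \emph{different} level components at the crossing, the fiber is a disjoint union of two figure-eights (usually denoted $\mathrm{I\hspace{-1.2pt}I^{11}}$), not $\mathrm{I\hspace{-1.2pt}I^{2}}$. Both $\mathrm{I\hspace{-1.2pt}I^{2}}$ and $\mathrm{I\hspace{-1.2pt}I^{3}}$ are connected fibers through both saddle points. In $\mathrm{I\hspace{-1.2pt}I^{2}}$ the two saddle points are joined by two edges and each carries a loop (a chain of three circles). In $\mathrm{I\hspace{-1.2pt}I^{3}}$ they are joined by four edges. So your criterion aims at the wrong condition. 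Your bump model does not meet it anyway: two bumps on the slope of a common hill have their saddles on the same ring-shaped level curve when their values agree.

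What actually excludes $\mathrm{I\hspace{-1.2pt}I^{3}}$ in this kind of construction is that the disk functions have no interior local minima (only saddles and maxima, with $\partial D^{2}$ as the bottom level). Then every sublevel set $\{h_{\theta}<c\}$ is connected. Hence a connected level component through two saddles has exactly one complementary region on the sublevel side and three on the superlevel side. That is the chain $\mathrm{I\hspace{-1.2pt}I^{2}}$, never the two-and-two pattern of $\mathrm{I\hspace{-1.2pt}I^{3}}$. The paper's $\psi^{i}$ and all its moves have this property, and your bumps would too if they are maxima.

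\textbf{The closing-up.} The phrase ``the closed braid returns the configuration to itself'' only says the saddle \emph{positions} return, not the function. Exchanging two saddles by a half-twist $\rho_{t}$ gives the family $h\circ\rho_{t}^{-1}$, whose critical values are constant and which ends at $h\circ\rho_{1}^{-1}\neq h$. At that point the larger saddle value sits at the other position, the maxima are permuted, and the level structure of the twisted region is rotated. You never produce the path back to $h$, and without it the concatenation is not a loop.

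The paper builds exactly this path for each syllable $\sigma_{z_i}^{m_i}$:
\begin{itemize}
\item It twists a superlevel disk component bounded by a level circle and containing only the two saddles to be exchanged, so the twist preserves the function near its boundary.
\item An even number of half-twists is a product of full twists and returns exactly to $\psi^{i}$, with no crossings at all.
\item For odd $m_{i}$ it then exchanges the two saddle values by Lemma~\ref{sub1}, giving one $\mathrm{I\hspace{-1.2pt}I^{2}}$ fiber, and permutes the heights of the top maxima, giving only definite fold crossings.
\item For $z_{i}\neq 1$, the staircase model's superlevel component containing the two relevant saddles also contains all higher ones. So the paper first reorders three consecutive saddles by Lemma~\ref{sub1}, twists, and reorders back.
\end{itemize}

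\textbf{A smaller point.} On $V'$ use the projection $D^{2}\times S^{1}\to D^{2}$, which has no singular points. Your map collapses the meridian of $V$, i.e.\ a longitude of $V'$. The standard map with a definite fold along the core of $V'$ instead collapses the meridian of $V'$, so it does not agree with yours on the common torus.
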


By using \cite{Lamb-Rourke1997}, for a general 3-manifold, we obtain the next corollary.

\begin{corollary}
\label{c1}
        Let $M$ be a closed orientable $3$-manifold and $L$ a link in $M$. Then, there exists a stable map $f : M \to S^{2}$ such that $f$ has no cusp points, $S_{1} (f)$ is isotopic to $L$, and $f$ has no singular fibers of type $\mathrm{I\hspace{-1.2pt}I^{3}}$.
\end{corollary}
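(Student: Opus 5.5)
The plan is to reduce to the local picture behind Theorem~\ref{main1} by means of a braid-type presentation of $L$ in $M$, which is what I expect \cite{Lamb-Rourke1997} supplies. Concretely, I would first fix an open book decomposition $(B,\pi)$ of $M$, with binding $B$ and fibration $\pi : M\setminus B \to S^{1}$. I would then isotope $L$ into a closed braid with respect to it, so that $L\cap B=\emptyset$ and $\pi|_{L}$ has no critical points. I would take this braiding statement from \cite{Lamb-Rourke1997}, or from the analogous Alexander-type theorem for open books. This replaces the standard braid axis used in $S^{3}$: the circle coordinate $\pi$ plays the role of the angular coordinate around the axis.

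Next, I would recall the construction in the proof of Theorem~\ref{main1}. Near a closed braid it is local: on a tubular solid torus $V\cong D^{2}\times S^{1}$ containing the braid, the map is of the form $(x,\theta)\mapsto (h_{\theta}(x),\theta)$, with values in an annulus. Here $h_{\theta}$ is a one-parameter family of functions on the disk whose critical points of index $1$ trace out the braid, and it has no cusps. I would transplant this model into a neighbourhood $V$ of $L$ in $M$, identifying the $S^{1}$ factor with $\pi$. This is done so that $S_{1}$ of the model equals $L$, and so that fibers containing two indefinite fold points are only of type $\mathrm{I\hspace{-1.2pt}I^{2}}$, exactly as before; I would use the same "no two saddles joined in a figure-eight" arrangement as in that proof. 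I would regard the target annulus as an equatorial band of $S^{2}$, with the radial coordinate measured by the value of $h$ and the angular coordinate given by $\pi$.

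The remaining step is to extend $f$ over $M\setminus V$ with only definite fold singularities and regular points. Near $B$, the map $\pi$ degenerates, and I would send a neighbourhood $B\times D^{2}$ to a polar cap of $S^{2}$ via the $D^{2}$ coordinate. This is a submersion away from $B$, and I would arrange it to be a definite fold or regular along $B$ by composing with a suitable radial function. The target $S^{2}$ rather than $\mathbb{R}^{2}$ is needed precisely here, since the angular coordinate $\pi$ winds globally. On each page $F_{\theta}\setminus V$, I need a function interpolating between the boundary data on $\partial V$ and on $\partial N(B)$ that has only local extrema, whose traces give definite folds. A Morse function on a page of positive genus would generally force index-$1$ critical points, that is, extra indefinite folds.

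I expect this last point to be the main obstacle. The first thing I would try is to use the freedom to stabilize the open book, and to add definite-fold circles, so as to cut the pages into pieces of genus $0$. On planar pieces the required function can be taken with only extrema. Alternatively, I would exploit that the map into the $2$-dimensional target need not factor through a page function, so that the handles of the page can be spread over the two polar caps and thereby carry no singular points. After the extension, I would check by a small perturbation that $f$ is stable. No cusps or new indefinite folds are created by this perturbation, so $S_{1}(f)$ is isotopic to $L$ and no fiber of type $\mathrm{I\hspace{-1.2pt}I^{3}}$ appears.
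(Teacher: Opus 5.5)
\textbf{There is a genuine gap: the extension step you flag as the main obstacle fails for every $M\neq S^{3}$.} You have located the crux correctly, but the difficulty is structural, not technical.

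On $M\setminus(V\cup N(B))$ your map has the form $(x,\theta)\mapsto(\rho_{\theta}(x),\theta)$. Its singular points are therefore exactly the critical points of the $\rho_{\theta}$: extrema give definite folds, saddles give indefinite folds, and degenerate points give cusps. Each $\rho_{\theta}$ lives on $P=F\setminus(D\cup\text{collar of }\partial F)$, a surface of genus $g$ with $b+1$ boundary circles. Your boundary data make all of these circles regular level sets: $\partial D_{\theta}$ goes to the minimum of the model, and $\partial N(B)\cap F_{\theta}$ goes to the boundary latitude of the cap. Poincar\'e--Hopf then gives $\#\{\text{extrema}\}-\#\{\text{saddles}\}=\chi(P)=1-2g-b$. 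So having no saddles forces $g=0$ and $b=1$, which is the disk page of $S^{3}$.

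None of your proposed remedies changes this count:
\begin{itemize}
\item Stabilizing the open book only makes $\chi(P)$ more negative.
\item Adding definite-fold circles adds extrema, and hence forces \emph{more} saddles.
\item Cutting pages along circles does not change $\chi$, so some piece still has $\chi<0$.
\item ``Spreading the handles over the polar caps'' is not an argument.
\end{itemize}

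The transplant step is also unjustified. The model of Theorem~\ref{main1} lives on a solid torus whose meridian disks contain all $n$ strands and carry $n$ saddles and $n+1$ maxima. A closed braid in a general open book need not lie in a solid torus fibred by disks in the pages. For instance, two disjoint $1$-strand braids with non-proportional classes in $H_{1}(M\setminus B;\mathbb{Q})$ cannot both lie in one such solid torus.

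\textbf{The missing idea is to avoid extending over a complicated complement at all.} The paper builds everything in $S^{3}$, where the complement of the braid solid torus $V_{1}$ is a solid torus $V_{2}$ that simply projects to a disk. It then reaches $M$ by integral surgery on the closure $\hat{B}=K_{1}\cup\dots\cup K_{l}$ of a \emph{pure} braid, using Lamb--Rourke, and braids $L$ together with $\hat{B}$ via their Theorem~5.3. The surgery curves $K_{i}$ are realized as \emph{definite} fold circles.

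The surgery is then performed at the level of maps:
\begin{itemize}
\item The fibers on $\partial N(K_{i})$ are meridians of $K_{i}$.
\item Because the surgery is integral, these meridians are longitudes of the new solid torus $U_{i}$.
\item Hence the projection $U_{i}=\mathbb{D}^{2}\times S^{1}\to\mathbb{D}^{2}$ glues in smoothly, as a submersion, onto the disk in $S^{2}$ bounded by $f(\partial N(K_{i}))$ that contains $f(K_{i})$.
\item That curve is an embedded circle because $K_{i}$ is a single strand of a pure braid, so its image is a graph over the angular coordinate.
\end{itemize}
No singular points are created, so $S_{1}$ and the absence of type $\mathrm{I\hspace{-1.2pt}I^{3}}$ fibers survive.

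This capping is why the target must be $S^{2}$. The global winding of the angular coordinate is not the reason, since that already occurs inside $\mathbb{R}^{2}$ for $S^{3}$. Your instinct to add definite-fold circles points in the right direction, but they must serve as surgery curves, not as cuts of the pages.

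One caveat about the paper itself: the written proof of Corollary~\ref{c1} cites \cite[Theorem~1.1]{K2025ar} and concludes about $S_{0}$. For the $S_{1}$ statement one needs $\hat{B}$ definite and the braided $L$ indefinite simultaneously, as in the proof of Corollary~\ref{c3}.
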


    

Moreover, combining this result, \cite[Theorem 1.1]{K2025ar} and \cite{Lamb-Rourke1997}, we obtain the following.

\begin{theorem}
    \label{main2}
        Let $L_{0}$ and $L_{1}$ be links in $S^{3}$. Then, there exists a stable map $f : S^{3} \to \mathbb{R}^{2}$ such that $f$ has no cusp points, $S_{0} (f)$ is isotopic to $L_{0}$, $S_{1} (f)$ is isotopic to $L_{1}$, respectively, and $f$ has no singular fibers of type $\mathrm{I\hspace{-1.2pt}I^{3}}$.
\end{theorem}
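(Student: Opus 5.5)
We combine two stable maps: one from \cite[Theorem 1.1]{K2025ar} realizing $L_{0}$ as its definite fold set, and one from Theorem~\ref{main1} realizing $L_{1}$ as its indefinite fold set. The combination is a connected-sum type gluing performed in a region where both maps are standard.

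\textbf{Step 1 (two maps).} By \cite[Theorem 1.1]{K2025ar}, take a stable map $g_{0} : S^{3} \to \mathbb{R}^{2}$ with the following properties:
\begin{itemize}
\item $g_{0}$ has no cusps and no fibers of type $\mathrm{I\hspace{-1.2pt}I^{3}}$;
\item $S_{0}(g_{0})$ is isotopic to $L_{0}$.
\end{itemize}
By Theorem~\ref{main1}, take $g_{1}$ with the following properties:
\begin{itemize}
\item $g_{1}$ has no cusps and no fibers of type $\mathrm{I\hspace{-1.2pt}I^{3}}$;
\item $S_{1}(g_{1})$ is isotopic to $L_{1}$.
\end{itemize}
The difficulty is that $g_{0}$ has its own indefinite folds and $g_{1}$ has its own definite folds, which must be removed. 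I would inspect the visual constructions (braid-closure pictures) used in both proofs. I expect them to come from a fixed ``base'' map, essentially the standard projection $S^{3} \to \mathbb{R}^{2}$ with a single definite fold circle, modified along a braid presentation. If so, $g_{0}$ can be chosen with $S_{1}(g_{0})=\emptyset$, or with only trivially removable indefinite circles. Likewise $g_{1}$ has definite fold set an unknotted circle bounding a region where the map looks like the standard projection near the boundary of its image.

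\textbf{Step 2 (gluing).} Choose a small $3$-ball $B_{i}$ around a regular fiber arc near the outer definite fold of each $g_{i}$. On it $g_{i}$ is the standard model $(x,y,z) \mapsto (x, y^{2}+z^{2})$ near a definite fold. Remove $B_{0}$ and $B_{1}$ and glue the complements so that $S^{3} \# S^{3} \cong S^{3}$. To do this at the level of maps, use the Lamb--Rourke technique \cite{Lamb-Rourke1997} of combining fibrations over surfaces. Concretely, place the images of $g_{0}$ and $g_{1}$ in disjoint discs in $\mathbb{R}^{2}$, then join them by a band along which the two outer definite fold arcs merge. This is a ``fold merging'' (saddle on the definite fold curve). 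It replaces the two unknotted definite components by their band sum, and it introduces no cusps and no new indefinite points. The result $f$ has $S_{0}(f) \cong S_{0}(g_{0}) \# (\text{unknot}) \cong L_{0}$, taken as a split union where appropriate, and $S_{1}(f) = S_{1}(g_{1}) \cong L_{1}$. For this, the extraneous definite circle of $g_{1}$ must be exactly the unknotted component absorbed in the band sum.

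\textbf{Step 3 (verification).} Check the following for $f$:
\begin{itemize}
\item \emph{Stability.} The fold curves are immersed with normal crossings in the image. This holds after a small perturbation away from the band.
\item \emph{No $\mathrm{I\hspace{-1.2pt}I^{3}}$ fibers.} Such a fiber needs two indefinite fold points in a single connected fiber component. The band region contains only definite folds, and the images of the indefinite parts stay in the disc of $g_{1}$, so no new fibers with two indefinite points appear.
\end{itemize}

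\textbf{Main obstacle.} The hard part is Step 1–2 compatibility. One must arrange that the extra fold components of each map (the indefinite folds of $g_{0}$ and the definite folds of $g_{1}$) are a single unknotted definite circle that can be cancelled by the merge, not knotted or linked with the essential link. If $g_{0}$ necessarily carries indefinite circles, I would first try to eliminate them by fold-pair cancellations performed in a neighbourhood of a regular fiber. Failing that, I would construct $f$ directly: take the braid presentations of $L_{0}$ and $L_{1}$ and run the two visual constructions simultaneously on the two hemispheres of the standard projection.
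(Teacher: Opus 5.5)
Your Step~2 works as a construction. Merging two outermost definite fold arcs along a band realizes the connected sum $S^3\# S^3$, with $S_0(f)$ the band sum of one component of $S_0(g_0)$ and one of $S_0(g_1)$, and $S_1(f)=S_1(g_0)\sqcup S_1(g_1)$ a split union. The gap is in Step~1. This yields $L_0$ and $L_1$ only if $S_1(g_0)=\emptyset$ and $S_0(g_1)$ is a single unknot, and the first condition is impossible in general. A cusp-free stable map $S^3\to\mathbb{R}^2$ without indefinite folds is special generic. Its Stein factorization is then a disk, so its fold set is one unknotted circle (Saeki's theorem on special generic maps). Hence $S_1(g_0)\neq\emptyset$ whenever $L_0$ is not the unknot, and no fold-pair cancellation can remove it; in the construction of \cite{K2025ar} it is the closure of a trivial braid. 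Likewise, the definite fold set of the map of Theorem~\ref{main1} is the closure of a braid the construction does not control (Lemma~\ref{sub2} only says it is \emph{some} braid), and in general it has several components. The approach is in fact circular. If $L_1$ is a knot, one of $S_1(g_0)$, $S_1(g_1)$ must be empty. That summand then has a connected, unknotted definite fold set, so the other summand must already satisfy the theorem.

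The missing idea is to cancel the uncontrolled part at the braid level inside one solid cylinder, rather than gluing two global maps. Present $L_0$ and $L_1$ by an $n$-braid $b_0$ and an $(n-1)$-braid $b_1$. Lemma~\ref{sub2}, i.e.\ the construction of Theorem~\ref{main1} restricted to $N_1\to R_1$, gives $h_1$ on $\mathbb{D}^2\times[0,1]$ with indefinite fold braid $b_1$ and some definite fold $n$-braid $b'$. Lemma~\ref{sub3}, from \cite{K2025ar}, gives $h_0$ with definite fold braid $(b')^{-1}b_0$ and trivial indefinite fold braid. Both restrict to the same Morse function (Figure~\ref{bound.mu}) on the ends of the cylinder, so they can be stacked. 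The stacked map has definite braid $b'(b')^{-1}b_0\simeq b_0$ and indefinite braid $b_1$, and one closes up with $N_2\to R_2$ and $V_2\to E$ as in Theorem~\ref{main1}. Your fallback of running the two constructions ``simultaneously'' points in this direction. Without the $(b')^{-1}$ cancellation and the matching boundary Morse functions, however, it is not an argument. Separately, \cite{Lamb-Rourke1997} concerns braid presentations of links in $3$-manifolds, not combining fibrations, and the paper uses it only for the corollaries.
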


\begin{corollary}
\label{c3}
        Let $M$ be a closed orientable $3$-manifold and let $L_{0}$ and $L_{1}$ be links in $M$. Then, there exists a stable map $f : M \to S^{2}$ such that $f$ has no cusp points, $S_{0} (f)$ is isotopic to $L_{0}$ and $S_{1} (f)$ is isotopic to $L_{1}$, respectively, and $f$ has no singular fibers of type $\mathrm{I\hspace{-1.2pt}I^{3}}$.
\end{corollary}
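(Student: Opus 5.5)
The plan is to reduce Corollary~\ref{c3} to Theorem~\ref{main2} using the branched covering result of \cite{Lamb-Rourke1997}, following the same route as Corollary~\ref{c1}. The input is the theorem of Lambert--Rourke type: for any closed orientable $3$-manifold $M$ and any link $L \subset M$, there is a simple branched covering $p : M \to S^{3}$ (in fact $3$-fold, branched over a link $B\subset S^3$) such that $L$ is isotopic to $p^{-1}(L')$ for some link $L'$ in $S^{3}$ lying in the complement of the branch set. Moreover, $p$ restricted to $p^{-1}(L')$ is a homeomorphism onto $L'$, or at least $L'$ lifts to a copy of $L$ with the other lifts controllable.

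\textbf{Step 1.} Apply this to the link $L_{0}\sqcup L_{1}$, after isotoping the two links to be disjoint. We obtain a covering $p : M \to S^{3}$, a branch set $B$, and links $L_{0}',L_{1}' \subset S^{3}\setminus B$ whose preimages contain $L_0,L_1$ up to isotopy. The extra lifted components are the main nuisance. I would choose the version of \cite{Lamb-Rourke1997} in which every link in $M$ is the \emph{full} preimage of a link in $S^3$ avoiding $B$, and use it. If only partial preimages are available, I would instead arrange for the extra components to be removed at the $S^2$ level.

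\textbf{Step 2.} Since $p$ has a branch locus, composing a stable map $S^3\to\mathbb{R}^2$ directly with $p$ is not stable along $B$. This is why the target is $S^{2}$. I would not use $f\circ p$ directly. Instead, I would take the map $g : S^{3} \to \mathbb{R}^{2}$ of Theorem~\ref{main2} for $L_0',L_1'$, and choose it so that near $B$ it is a submersion whose restriction to $B$ is nice. Then $g\circ p$ fails to be a submersion only along $p^{-1}(B)$. To repair this, I would compose with an embedding $\mathbb{R}^2\subset S^2$ and modify $g\circ p$ in a tubular neighbourhood of $p^{-1}(B)$. Locally, the branched cover is $(z,t)\mapsto(z^2,t)$ or $z^3$, and composing with a submersion produces a map of the form $(z,t)\mapsto(\mathrm{Re}\,z^2, t)$, whose singular set is a line. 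So rather than modifying $g\circ p$, I would arrange $g$ so that $B$ maps to a curve in a region of $\mathbb{R}^{2}$ that can be pushed to cover the point at infinity in $S^2$, making the branching invisible. Concretely, I would wrap a neighbourhood of $p^{-1}(B)$ around $S^2$, using the extra room in $S^2$ exactly as in Corollary~\ref{c1}. Away from $p^{-1}(B)$, $p$ is a local diffeomorphism, so fold types and singular fibers are preserved locally. The result has $S_0=p^{-1}(L_0')$, $S_1=p^{-1}(L_1')$, no cusps, and no $\mathrm{I\hspace{-1.2pt}I^{3}}$ fibers, provided no fiber of the new map contains two indefinite fold points from different sheets in the $\mathrm{I\hspace{-1.2pt}I^3}$ configuration.

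\textbf{Main obstacle.} The hardest step is the fiber condition after pulling back. Fibers of $g\circ p$ are preimages of fibers of $g$, so a fiber of $g$ of type $\mathrm{I\hspace{-1.2pt}I^{2}}$, or a regular fiber meeting $B$, lifts to a union of pieces. The lifts of two separate $\mathrm{I}$-type points must not combine into an $\mathrm{I\hspace{-1.2pt}I^3}$ configuration. Since $p$ is a covering over $S^3\setminus B$, the component of a fiber through an indefinite fold point is a covering of the corresponding component downstairs, which is a figure-eight type fiber. A $\mathrm{I\hspace{-1.2pt}I^3}$ fiber requires a theta-shaped connected component containing both points. I would argue this cannot arise from a $\mathrm{I\hspace{-1.2pt}I^{2}}$ fiber unless that fiber component meets $B$. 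I would therefore choose $g$ so that $B$ is disjoint from all fiber components of $g$ containing indefinite fold points. This is achievable by a small isotopy of $B$, because those components form a $2$-dimensional complex in $S^3$ and $B$ is $1$-dimensional, so they generically miss each other.
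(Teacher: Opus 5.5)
Your proposal has a genuine gap in Step 2, and it comes from the choice of tool. It is also not the route of Corollary~\ref{c1}: the paper proves both corollaries by surgery, not by branched covers. Let $\tilde B\subset M$ be the ramification curve of $p$, where $p$ has local degree $2$. For generic $B$ (disjoint from $S(g)$ and transverse to the fibres of $g$), your own local model $(z,t)\mapsto(\mathrm{Re}\,z^{2},t)=(x^{2}-y^{2},t)$ is exactly the normal form of an \emph{indefinite fold}.

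This concerns the rank of the differential at source points, so it does not depend on where $g(B)$ lies or on compactifying the target to $S^{2}$; ``pushing the branching to infinity'' removes nothing. Nor can the fold be removed by changing the map only on a tubular neighbourhood $V=\{|z|\le\epsilon\}\times S^{1}$ of $\tilde B$. Suppose $F$ were a submersion on $V$ agreeing with $(x^{2}-y^{2},t)$ near $\partial V$. Then the first coordinate would have no critical points on the compact surface $F^{-1}(\{(s,t_{0}) : |s|\le\delta\})$. Hence the four ends of the fibres on $\partial V$ would be joined in the same way at $s=-\delta$ and at $s=\delta$. But for $\delta$ close to $\epsilon^{2}$ those fibres are the hyperbola branches $x^{2}-y^{2}=\pm\delta$ lying in the collar, and they join the ends differently. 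So your map has $S_{1}\supseteq p^{-1}(L_{1}')\cup\tilde B$, not $L_{1}$.

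Two further steps also fail. First, away from $\tilde B$, every singular curve of $g$ has at least two sheets of $p^{-1}$ over it with identical images. So $g\circ p$ violates condition~(6) and must be perturbed before your fibre analysis even applies. Second, the connected double covers of a figure-eight component are exactly the two connected graphs with two $4$-valent vertices, i.e.\ the $\mathrm{I\hspace{-1.2pt}I^{2}}$ and $\mathrm{I\hspace{-1.2pt}I^{3}}$ shapes. Thus $\mathrm{I\hspace{-1.2pt}I^{3}}$-shaped components arise by lifting ordinary type~I fibres, with no need to meet $B$. Finally, \cite{Lamb-Rourke1997} is not a branched-covering theorem and does not supply the full-preimage statement of Step~1. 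In the paper it supplies a surgery presentation of $M$.

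The missing idea is to realize the surgery curves as \emph{definite} folds and then refill them. By \cite{Lamb-Rourke1997}, $M$ is integral surgery on the closure $\hat B$ of a pure braid $B$, and $L_{0},L_{1}$ are represented by a mixed braid $B_{0}\cup B_{1}\cup B$. After stabilizing to fix the strand numbers, the construction of Theorem~\ref{main2} gives $f\colon S^{3}\to\mathbb{R}^{2}$ with no cusps and no $\mathrm{I\hspace{-1.2pt}I^{3}}$ fibres, and (after an ambient isotopy) $S_{0}(f)=\hat{B_{0}}\cup\hat B$ and $S_{1}(f)=\hat{B_{1}}$.

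Near a component $K_{i}$ of $\hat B$ the map is $(u,x,y)\mapsto(u,x^{2}+y^{2})$. So the fibres over the curve $f(\partial N(K_{i}))$ are meridians of $K_{i}$, and for an integral framing these are longitudes of the surgery solid torus $U_{i}$. Since $B$ is pure, $f(K_{i})$ is a section of the annulus $A$. Hence $f(\partial N(K_{i}))$ is an embedded curve bounding a disk $D_{i}\subset S^{2}$ on the side containing $\infty$; this is why the target must be $S^{2}$. Replacing $f|_{N(K_{i})}$ by the trivial projection $U_{i}\to D_{i}$ deletes $K_{i}$ from $S_{0}$ and introduces no singular points. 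A branched cover has no analogous mechanism, because its ramification locus produces fold curves that cannot be cut out locally.
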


\begin{remark}

The following are the results related to the results obtained in this paper. As a part of \cite{Ishikawa-Koda}, if a link $L$ in $S^{3}$ is obtained by Dehn filling on a specific model link, then there exists a stable map $f : S^{3} \to \mathbb{R}^{2}$ such that $f$ has no cusp points and no singular fibers of type $\mathrm{I\hspace{-1.2pt}I^{3}}$, $S_{0} (f)$ contains $L$ and $f$ has only a unique singular fiber of type $\mathrm{I\hspace{-1.2pt}I^{2}}$. As a part of \cite{Furutani-Koda}, if a link $L$ in $S^{3}$ is obtained by Dehn filling on a specific model link, then there exists a stable map $f : S^{3} \to \mathbb{R}^{2}$ such that $f$ has no cusp points and no singular fibers of type $\mathrm{I\hspace{-1.2pt}I^{2}}$, $S_{0} (f)$ contains $L$ and $f$ has only a unique singular fiber of type $\mathrm{I\hspace{-1.2pt}I^{3}}$. As a part of \cite{Ichihara-K}, if a two-bridge link $L$ in $S^{3}$ is given by a Conway form $C(a_{1}, b_{1}, \dots, a_{m}, b_{m}, a_{m+1})$ with non-zero integers $a_{i}, b_{i}$ where each $b_{i}$ is even, then there exists a stable map $f : S^{3} \to \mathbb{R}^{2}$ such that $S_{0}(f)$ is isotopic to $L$, $f$ has $2m$ singular fibers of type $\mathrm{I\hspace{-1.2pt}I^{2}}(f)$, and no cusp points and no singular fibers of type $\mathrm{I\hspace{-1.2pt}I^{3}}(f)$. As a part of \cite{Kalmar-Stipsicz}, if a $3$-manifold $M$ is obtained by integral surgery on a given link $L$ in $S^{3}$, then there is a stable map $f : M \to \mathbb{R}^{2}$ such that $S_{1} (f)$ contains a link in $S^3 \setminus N_L$, which is isotopic to $L$ in $S^3$.

\end{remark}


\section{The proof of Theorem~\ref{main1}}
\label{s2}

In this section, we give a proof of Theorem~\ref{main1}. Before proceeding to the proof, we introduce several definitions and known facts, and establish Lemma~\ref{sub1} for later use.

For smooth manifolds $M$ and $N$, let $C^{\infty}(M,N)$ be the set of smooth maps from $M$ to $N$ with the Whitney topology. A smooth map $f : M \to N$ is called a \textit{stable map} if there exists a neighborhood $U_{f}$ of $f$ in $C^{\infty}(M,N)$ such that, for any map $g$ in $U_{f}$, there are diffeomorphisms $\Phi : M \to M$ and $\phi: N \to N$ satisfying $g = \phi \circ f \circ \Phi^{-1}$. In the case where the source and target dimensions are $3$ and $2$, a characterization of stable maps is given as follows (c.t.\cite{Levin'65}). For a closed orientable $3$-manifold $M$, a smooth map $f : M \to \mathbb{R}^{2}$ is a stable map if and only if $f$ is locally described as in one of the following forms:
\begin{enumerate}
    \item $(u,x,y) \mapsto (u,x)$,
    \item $(u,x,y) \mapsto (u,x^{2} + y^{2})$,
    \item $(u,x,y) \mapsto (u,x^{2} - y^{2})$,
    \item $(u,x,y) \mapsto (u,y^{2} + ux - x^{3})$,
\end{enumerate}
and $f$ globally satisfies

\begin{enumerate}[resume]
    \item $f^{-1}(f(p)) \cap S(f) = \{p\}$ for a point $p$ around which $f$ is locally expressed as (4),
    \item except for cusp points, the restriction of $f$ to $S(f)$ is an immersion with only normal crossings.
\end{enumerate}

The points around which $f$ is described as (2), (3), and (4) are the singular points of $f$, and they are called \textit{definite fold} points, \textit{indefinite fold} points, and \textit{cusp} points, respectively. The set of definite fold points and indefinite fold points is denoted by $S_{0}(f)$ and $S_{1}(f)$, respectively. Consequently, to prove Theorem~\ref{main1}, it suffices to construct a smooth map satisfying these conditions with the described properties. We here prepare a lemma, which is used in the proof of the theorems. Through the following, let $\mathbb{D}^{2}$ denote the $2$-disk.

    \begin{lemma}
    \label{sub1}

        For $t \in [0,1]$, let $\psi_{t} : \mathbb{D}^{2} \to \mathbb{R}$ be the Morse function as shown in Figure~\ref{<3}. Then, the singular set and the singular fibers of the smooth map $h : \mathbb{D}^{2} \times [0,1] \to \mathbb{R} \times [0,1]$ defined by $h(x, t) = (\psi_{t}(x), t)$ are described as shown in Figure~\ref{family}.
    \end{lemma}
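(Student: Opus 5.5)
The plan is to reduce the lemma to the elementary principle that, for a product-type map $h(x,t)=(\psi_t(x),t)$, both the singular set and the singular fibers are determined slice by slice by the one-parameter family of Morse functions $\{\psi_t\}$. First, compute the differential: $dh_{(x,t)}$ has rank $2$ exactly when $d(\psi_t)_x \neq 0$, because the $t$-component already contributes a surjection onto the second factor. Hence
\[
S(h)=\{(x,t) \in \mathbb{D}^{2}\times[0,1] : x \text{ is a critical point of } \psi_t\}.
\]
Next, each $\psi_t$ in Figure~\ref{<3} is Morse, and the critical points vary smoothly with $t$. So $S(h)$ is a disjoint union of arcs $\{(c_i(t),t)\}$, one for each critical point $c_i(t)$ of $\psi_t$. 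Note that no birth–death of critical points occurs, which is why no cusp points appear.

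Then I classify the singular points by the local normal forms (1)--(3) from Section~\ref{s2}. By the parametrized Morse lemma, near a nondegenerate critical point $c_i(t_0)$ there are coordinates $(u,x,y)$ with $u=t$ in which
\[
\psi_t = \psi_t(c_i(t)) \pm x^{2} \pm y^{2}.
\]
After the target change of coordinates $(s,t)\mapsto(t,\, s-\psi_t(c_i(t)))$, $h$ takes form (2) at extrema (index $0$ or $2$) and form (3) at saddles (index $1$). Consequently, the arcs through minima and maxima are definite fold curves, and the arcs through saddles are indefinite fold curves. Their images are the graphs $t\mapsto(\psi_t(c_i(t)),t)$ of the critical value functions, which matches the curves drawn in Figure~\ref{family}.

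Finally, I describe the singular fibers. For a point $(s,t)$ of the target, the fiber is $h^{-1}(s,t)=\psi_t^{-1}(s)\times\{t\}$, that is, a level set of the Morse function $\psi_t$. So the fiber types are read off from the level-set topology in Figure~\ref{<3}:
\begin{itemize}
\item a regular value gives a disjoint union of circles and arcs meeting $\partial\mathbb{D}^{2}$;
\item a local extremum value gives a point together with the regular components;
\item a saddle value gives a figure-eight or a cross-shaped curve, depending on how the level set meets the boundary.
\end{itemize}
At the parameters where two critical values coincide, the fiber contains two singular points, and these are the fibers drawn in Figure~\ref{family}. Where two saddle values coincide, I decide between type $\mathrm{I\hspace{-1.2pt}I^{2}}$ and type $\mathrm{I\hspace{-1.2pt}I^{3}}$ by checking whether the two saddles lie on different components of the level set or on the same component (Figure~\ref{types of sing. fibers}).

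The main obstacle is this last step together with the normal-crossing condition (6). I need that the critical value graphs cross transversally, i.e.\ that $\frac{d}{dt}\bigl(\psi_t(c_i(t))-\psi_t(c_j(t))\bigr)\neq 0$ at each coincidence. I also need to identify the connectivity of the level set through the two saddles at the crossing parameter. I would handle this by choosing $\psi_t$ explicitly, for instance as a sum of bump-type height functions with one value shifted linearly in $t$. That choice makes the crossings transverse by construction and the level-set picture directly readable, which matches Figure~\ref{family}.
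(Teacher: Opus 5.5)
Your slice-by-slice strategy is the same as the paper's. The paper just reads $S(h)$ and the fibers off the family $\psi_t$ and its Reeb graphs, noting that $v_{i-1},v_i$ reach the same height at $t=1/2$. Your computation of $S(h)$, the fold normal forms from the parametrized Morse lemma, and $h^{-1}(s,t)=\psi_t^{-1}(s)\times\{t\}$ are correct and more careful than the paper.

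\textbf{The gap.} The problem is your criterion for the fiber at a coincidence of two saddle values. If the two saddles lie on \emph{different} components of the level set, the fiber is a disjoint union of two figure-eights, i.e.\ type $\mathrm{I\hspace{-1.2pt}I^{11}}$. That is neither $\mathrm{I\hspace{-1.2pt}I^{2}}$ nor $\mathrm{I\hspace{-1.2pt}I^{3}}$: both of those are \emph{connected} fibers, so both arise only when the two saddles are on the same component. That component is a connected $4$-valent graph with two vertices, and there are exactly two possibilities:
\begin{itemize}
\item two loops joined by a pair of edges, which is type $\mathrm{I\hspace{-1.2pt}I^{2}}$;
\item four edges joining the two vertices, which is type $\mathrm{I\hspace{-1.2pt}I^{3}}$.
\end{itemize}
So ``same versus different component'' cannot separate $\mathrm{I\hspace{-1.2pt}I^{2}}$ from $\mathrm{I\hspace{-1.2pt}I^{3}}$. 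Yet that distinction is exactly what the lemma is used for later: Theorem~\ref{main1} needs every height switch to create fibers of type $\mathrm{I\hspace{-1.2pt}I^{2}}$ (or harmless ones) and never $\mathrm{I\hspace{-1.2pt}I^{3}}$.

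\textbf{A test that works.} Count the circles of $\psi_t^{-1}(s)$ near the fiber in the four regions cut out by the two crossing fold images.
\begin{itemize}
\item For $\mathrm{I\hspace{-1.2pt}I^{2}}$ the counts are $1,2,3,2$ in cyclic order: a circle splits, then one of the two resulting circles splits again.
\item For $\mathrm{I\hspace{-1.2pt}I^{3}}$ they are $2,1,2,1$: two circles merge at one saddle and the result splits at the other.
\end{itemize}

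\textbf{Applying it here.} In Figure~\ref{<3}, as in Figure~\ref{tegaki2}, $\partial\mathbb{D}^{2}$ is the lowest level and the only extrema are maxima. Hence the Reeb graph is a rooted tree in which every saddle splits one circle into two as the value increases. The merge-then-split pattern is therefore impossible, so no $\mathrm{I\hspace{-1.2pt}I^{3}}$ fiber can occur in these families. Since $v_{i-1}$ lies on a branch issuing from $v_i$, the two saddles are on the same component at $t=1/2$, and the fiber there is of type $\mathrm{I\hspace{-1.2pt}I^{2}}$. Coincidences of saddles on different branches give $\mathrm{I\hspace{-1.2pt}I^{11}}$. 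This is consistent with the paper later counting only four of the six swaps in Figure~\ref{evenne1} as type $\mathrm{I\hspace{-1.2pt}I^{2}}$. For the same reason, your alternatives about level sets meeting $\partial\mathbb{D}^{2}$ and ``cross-shaped'' saddle fibers do not arise in this setting.
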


\begin{proof}[Proof]
Fix an integer $i$ with $1 \le i \le n$. We regard $\{ \psi_{t} \}_{ 0 \le t \le 1}$ as an isotopy between Morse functions $\psi_{0}$ and $\psi_{1}$ illustrated in Figure~\ref{<3}. In Figure~\ref{<3}, the isotopy starts at the top corresponding to $\psi_{0}$, goes down to obtain another smooth map $\psi_{1/2} : \mathbb{D}^{2} \to \mathbb{R}$, and goes down to obtain $\psi_{1} : \mathbb{D}^{2} \to \mathbb{R}$.

    \begin{figure}[htbp]
        \setlength\unitlength{1truecm}
        \begin{picture}(15,6)(0,0)
            \put(0,-1.5){\includegraphics[width=1\textwidth,clip]{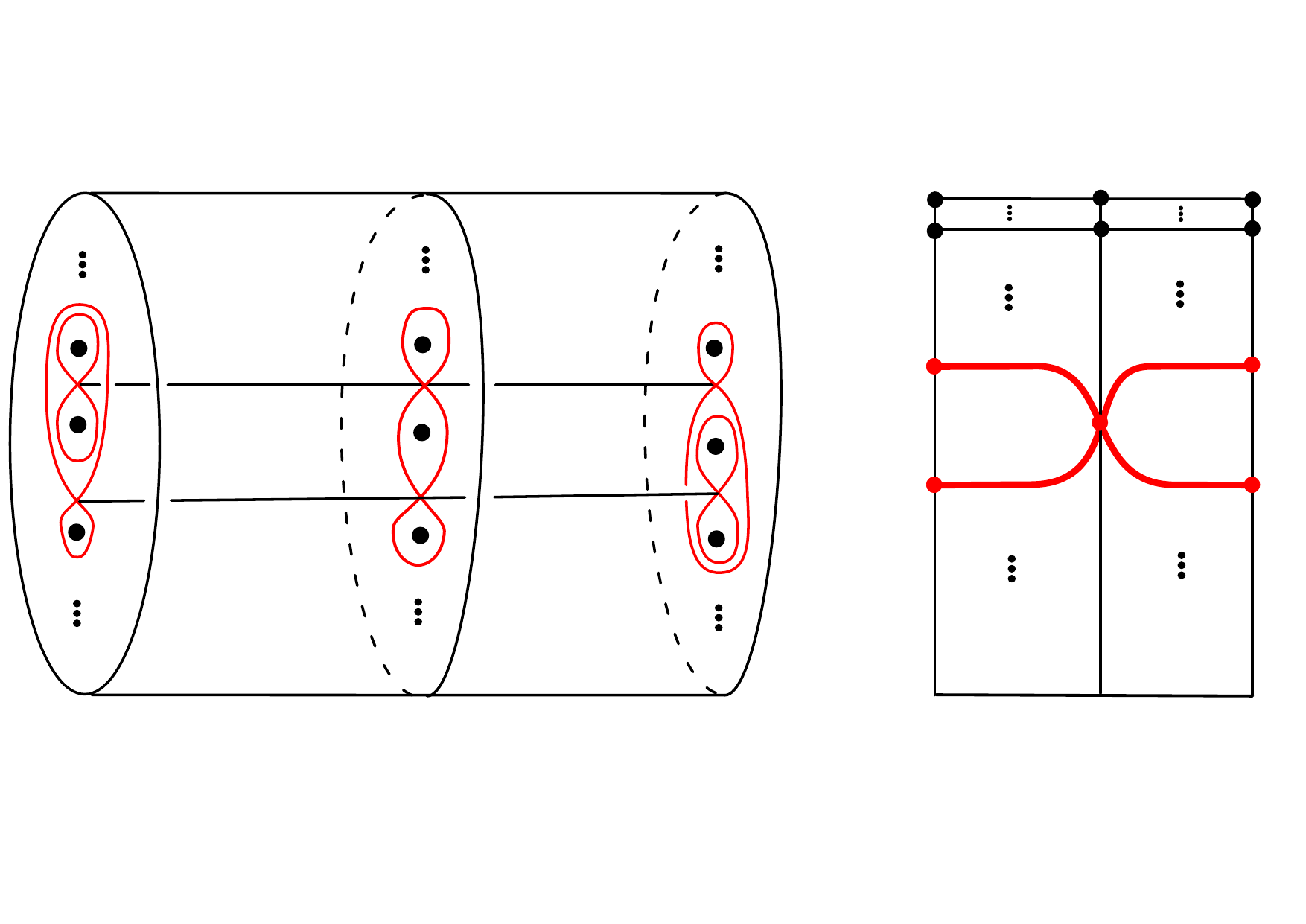}}

            \put(-0.7,3.7){$v_{i-1}$}
            \put(-0.7,2.7){$v_{i}$}

            \put(0,0.2){$\mathbb{D}^{2} \times \{ 0 \}$}
            \put(3.5,0.2){$\mathbb{D}^{2} \times \{ \frac{1}{2} \}$}
            \put(6.5,0.2){$\mathbb{D}^{2} \times \{ 1 \}$}

            \put(8,3.3){$h$}
            \put(8,3){$\longrightarrow$}

            \put(9,0.2){$\{  0 \}$}
            \put(10.5,0.2){$\{  \frac{1}{2} \}$}
            \put(12,0.2){$\{  1 \}$}
        \end{picture}
        \caption{The smooth map $h$.}
        \label{family}
    \end{figure}

Note that during the isotopy in Figure~\ref{<3}, the singular points and singular fibers move on the disks. The movement can be understood by observing the heights of two adjacent saddle points $v_{i-1}, v_{i}$ in Figure~\ref{<3}. On $\mathbb{D}^{2} \times \{ 1/2 \}$, the heights of $v_{i-1}, v_{i}$ lie at the same level. On $\mathbb{D}^{2} \times \{ t \}$ ($1/2 < t \le 1$), the relative positions of $v_{i-1}, v_{i}$ are switched from $\psi_{0}$. For each Morse functions $\psi_{t}$ $(0 \le t \le 1)$, it is denoted the Reeb graph $\tau_{t}$ of $\psi_{t}$ in Figure~\ref{<3} (middle). Here, the \textit{Reeb graph} $\tau_{t}$ of $\psi_{t}$ is defined as the quotient space obtained by contracting each connected component of the level sets of $\psi_{t}$ to a point, which often has the structure of a graph. In Figure~\ref{<3} (left), the fibers are the singular fibers of $\psi_{t}$, i.e., the preimages of the image of saddle points.  In Figure~\ref{family} (left), the fat points on $\mathbb{D}^{2} \times \{ 0 \}$, $\mathbb{D}^{2} \times \{ 1/2 \}$, and $\mathbb{D}^{2} \times \{ 1 \}$ represent the intersections of each disk with the set of definite fold points of $h$. In Figure~\ref{family} (left), the strings, whose intersection with each $\mathbb{D}^{2} \times \{ t \}$ consists of two points, represent the set of indefinite fold points of $h$. Thus, the singular set and the singular fibers of the smooth map $h : \mathbb{D}^{2} \times [0,1] \to \mathbb{R} \times [0,1]$ defined by $h(x, t) = (\psi_{t}(x), t)$ are described as shown in Figure~\ref{family}.

\begin{figure}[htbp]
        \setlength\unitlength{1truecm}
        \begin{picture}(15,20)(0,0)
            \put(0.5,13){\includegraphics[width=0.8\textwidth,clip]{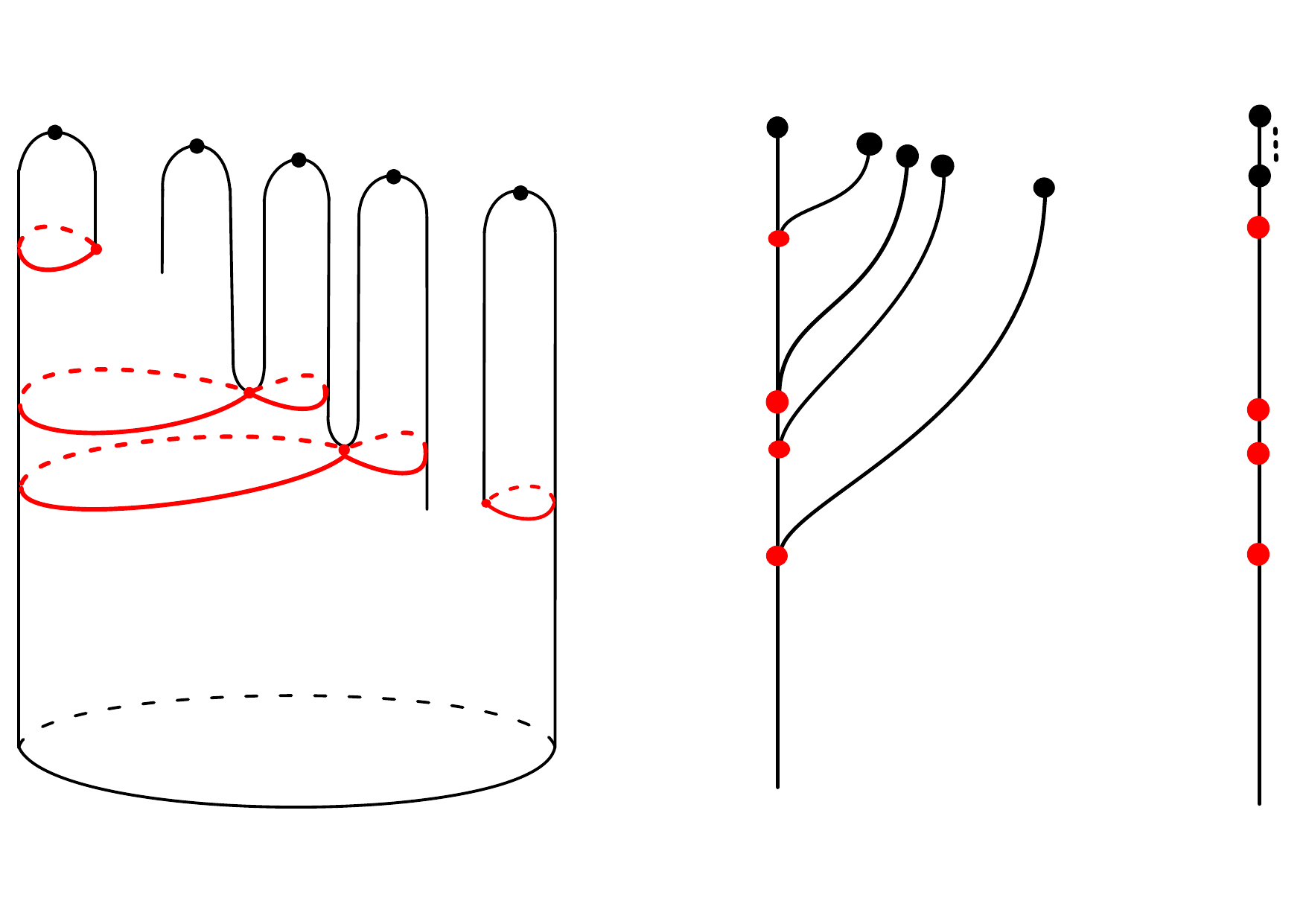}}
                \put(1.2,17.8){$v_{1}$}
                \put(2,16.5){$v_{i-1}$}
                \put(3,16.2){$v_{i}$}
                \put(4.2,15.8){$v_{l}$}
            \put(1.3,18.5){$\cdots$}
            \put(3.8,18){$\cdots$}
                    \put(6,18.2){$v_{1}$}
                    \put(5.7,17.1){$v_{i-1}$}
                    \put(6,16.6){$v_{i}$}
                    \put(6,15.8){$v_{l}$}
            \put(6,17.5){$\vdots$}
            \put(6,16.1){$\vdots$}
                    \put(9.7,18.3){$v_{1}$}
                    \put(9.5,17){$v_{i-1}$}
                    \put(9.7,16.6){$v_{i}$}
                    \put(9.7,15.7){$v_{l}$}
            \put(10,17.5){$\vdots$}
            \put(10,16.1){$\vdots$}
                \put(0,16.5){$\psi_{t} :$}
                \put(5.5,13.5){$(0 \le t < \frac{1}{2})$}
                \put(5.5,16.5){$\to$}
                \put(9,16.5){$\to$}

            \put(0.5,6.5){\includegraphics[width=0.8\textwidth,clip]{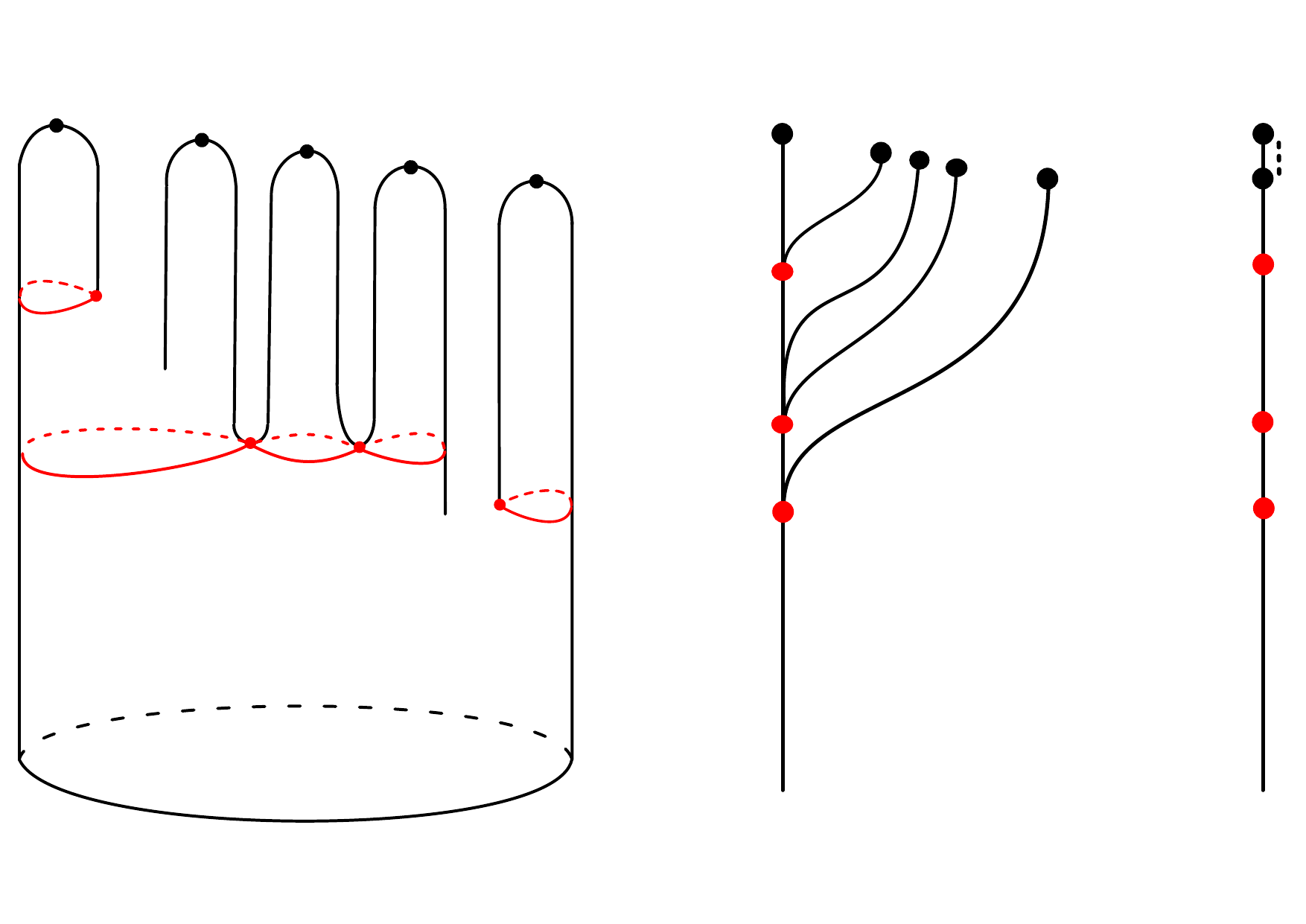}}
                \put(1.2,11){$v_{1}$}
                \put(2,9.7){$v_{i-1}$}
                \put(3.3,9.7){$v_{i}$}
                \put(4.2,9.3){$v_{l}$}
            \put(1.3,12){$\cdots$}
            \put(3.95,12){$\cdots$}
                    \put(6,11.5){$v_{1}$}
                    \put(5.3,10.4){$v_{i-1}, v_{i}$}
                    \put(6,9.6){$v_{l}$}
            \put(6,10.8){$\vdots$}
            \put(6,9.9){$\vdots$}
                    \put(9.7,11.5){$v_{1}$}
                    \put(9,10.5){$v_{i-1}, v_{i}$}
                    \put(9.7,9.6){$v_{l}$}
            \put(10,10.9){$\vdots$}
            \put(10,9.9){$\vdots$}
                \put(-0.2,10){$\psi_{\frac{1}{2}} : $}
                \put(5.5,7){$(t = \frac{1}{2})$}
                \put(5.5,10){$\to$}
                \put(9,10){$\to$}

            \put(0.5,0){\includegraphics[width=0.8\textwidth,clip]{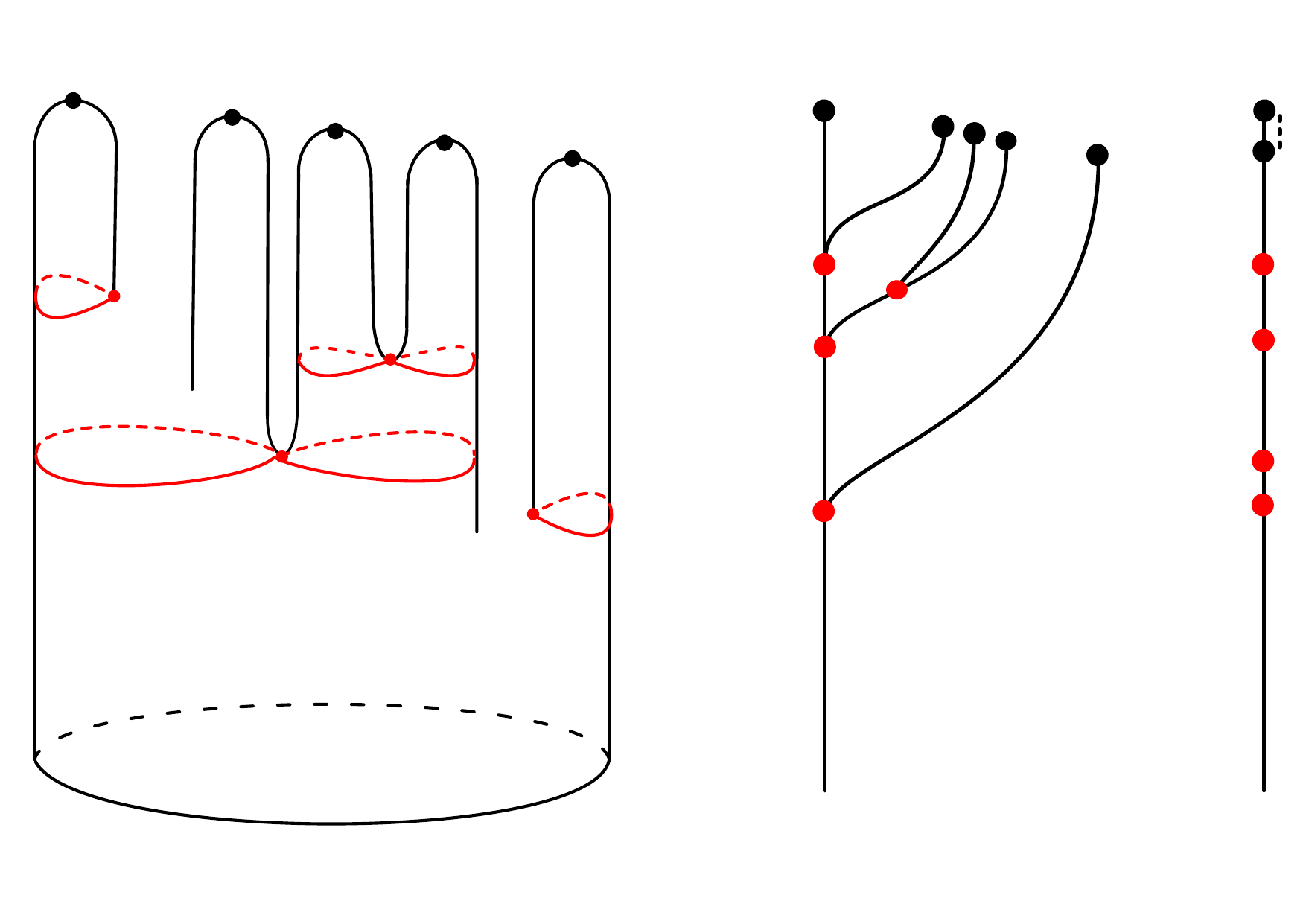}}
                \put(1,6.5){$1$}
                \put(2,6.5){$i-1$}
                \put(3,6.5){$i$}
                \put(3.6,6.5){$i+1$}
                \put(4.9,6.5){$l$}
            \put(1.5,5){$\cdots$}
            \put(4.2,5){$\cdots$}
                    \put(6.8,6.5){$1$}
                    \put(8,6.5){$i$}
                    \put(9.1,6.5){$l$}
            \put(7.2,6.2){$\cdots$}
            \put(8.4,5.8){$\cdots$}
                    \put(10.5,6.3){$1$}
                        \put(10.5,4.7){$\vdots$}
                        \put(10.5,3.3){$\vdots$}
                    \put(10.5,5.8){$l$}
                \put(0,3.6){$\psi_{t}: $}
                \put(5.5,0.2){$(\frac{1}{2} < t < 1)$}
                \put(5.5,3.6){$\to$}
                \put(9,3.6){$\to$}
            
        \end{picture}
        \caption{Morse functions $\psi_{t}^{i}$ $(0 \le t \le 1)$.}
        \label{<3}
    \end{figure}

\end{proof}

Let us recall some basic property of links in $S^{3}$. It is well known that any oriented link in the $3$-sphere $S^{3}$ is represented as the closure of a braid, and any braid is presented by a braid word with the generators $\sigma_{i}^{\pm 1}$, which are shown in Figure~\ref{sigma}. See \cite[Chapter 1]{Lickorish-KT}, for example.

    \begin{figure}[htbp]
        {\unitlength=1cm
        \begin{picture}(12.5,4)(0,0)
        \put(3,0){\includegraphics[height=5cm,clip]{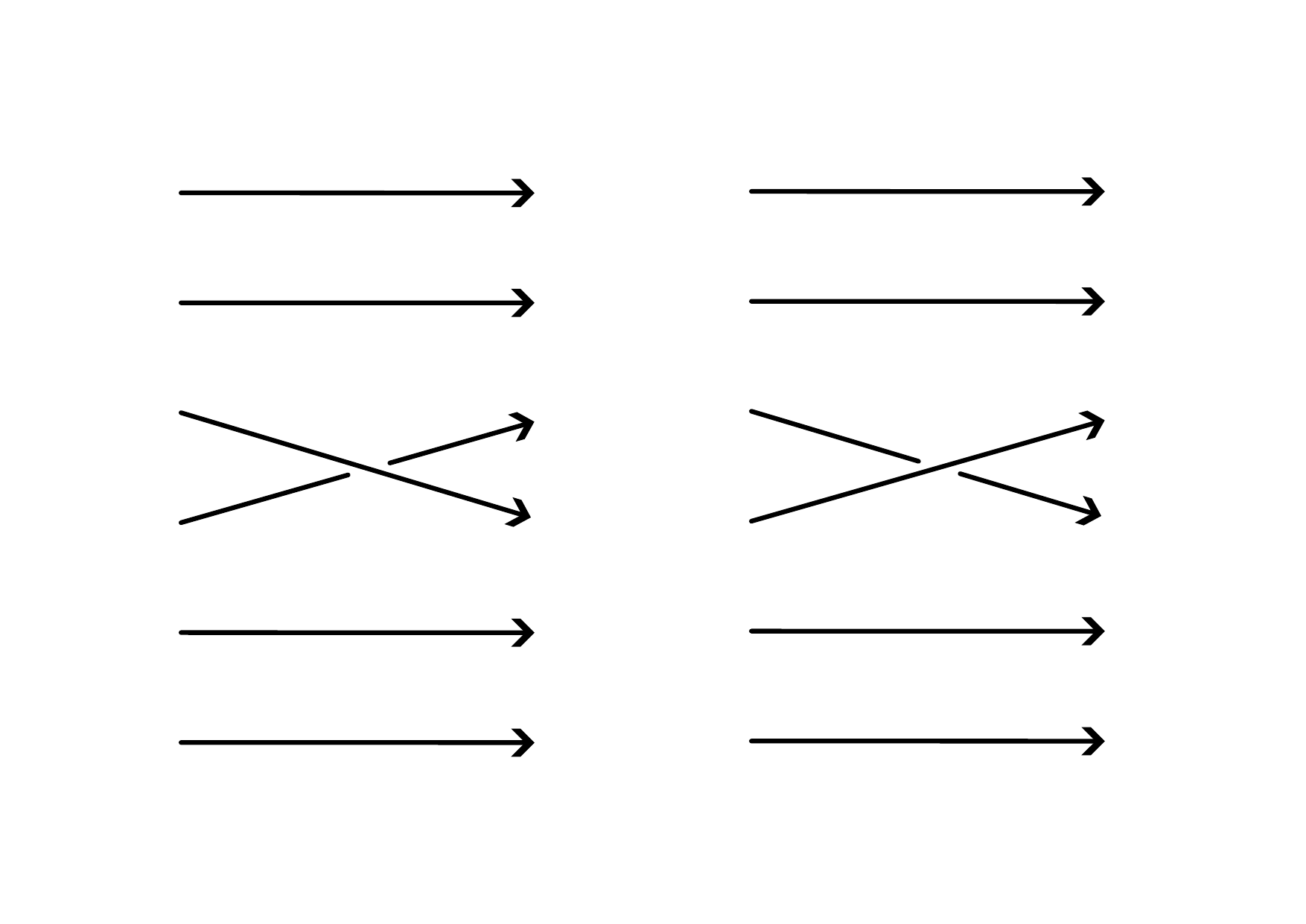}}
        \put(3.5,3.8){$1$}
        \put(3.5,3.2){$2$}
            \put(4.9,2.8){$\vdots$}
        \put(3.5,2.7){$i$}
        \put(3,2){$i+1$}
            \put(4.9,1.8){$\vdots$}
        \put(3,1.4){$n-1$}
        \put(3.5,0.9){$n$}
            \put(8,2.8){$\vdots$}
            \put(8,1.8){$\vdots$}
        
        \put(4.8,0.4){$\sigma_{i}$}
        \put(7.7,0.4){$\sigma_{i}^{-1}$}
        \end{picture}}
        \caption{The generator $\sigma_{i}$ and $\sigma_{i}^{-1}$.}
        \label{sigma}
    \end{figure}

Throughout the following, we denote the boundary of a manifold $N$ by $\partial N$.

\begin{proof}[Proof of Theorem~\ref{main1}.]

Let $L$ be a link in $S^{3}$. We isotope $L$ so that $L$ is represented by the closure of a braid $b$ of $n$ strings $(n \ge 2)$ given as a braid word $\sigma_{z_{1}}^{m_{1}} \sigma_{z_{2}}^{m_{2}} \cdots \sigma_{z_{l}}^{m_{l}}$, where $1 \leq z_{i} \leq n-1$, $z_{i-1} \neq z_{i}$, $m_{i}$ is non-zero integers for $i \in \{ 1, 2, \ldots, l \}$, and $l \ge 1$.

We will obtain a stable map $S^{3} \to \mathbb{R}^{2}$ by gluing several smooth maps on submanifolds of $S^{3}$. To construct these smooth maps, we make the following preparations. We decompose $S^{3}$ into two solid tori $V_{1}$ and $V_{2}$ as $S^{3} = V_{1} \cup_{\phi} V_{2}$. Here, $\phi$ denotes a diffeomorphism from $\partial V_{1}$ onto $\partial V_{2}$ such that a meridian of $V_{1}$ corresponds to a longitude of $V_{2}$ by $\phi$. (A $\textit{meridian}$ of $V_{1}$ is $\partial \mathbb{D}^{2} \times \{*\} \subset \mathbb{D}^{2} \times S^{1}=V_{1}$ and a $\textit{longitude}$ of $V_{2}$ is $\{ * \} \times S^{1} \subset \mathbb{D}^{2} \times S^{1} = V_{2}$.) We further decompose $V_{1}$ into $N_{1}$ and $N_{2}$, each of which is diffeomorphic to $\mathbb{D}^{2} \times [0,l]$. We further isotope $L$ so that $L$ is contained in $V_{1}$ and $L \cap N_{1}$ corresponds to the braid $b$. Next, we decompose $N_{1}$ into $l$ solid cylinders $W_{1}, \ldots, W_{l}$, where $W_{i}$ is regarded as $\mathbb{D}^{2} \times [i-1,i]$ $(1 \leq i \leq l)$ such that $L \cap W_{i}$ corresponds to $\sigma_{z_{i}}^{m_{i}}$ as shown in Figure~\ref{V1}. Let $A$ be an annular region in $\mathbb{R}^{2}$ and $E$ a disk in $\mathbb{R}^{2}$ such that the intersection $A \cap E = \partial E$. Also, we decompose $A$ into two rectangular regions $R_{1}$ and $R_{2}$ by $\gamma_{1}, \gamma_{l+1}$. Moreover, we decompose $R_{1}$ into $l$ rectangular regions by arcs $\gamma_{2}, \ldots, \gamma_{l}$ as shown in Figure~\ref{A}.

    \begin{figure}[htbp]
        {\unitlength=1cm
    \begin{picture}(10,7.5)(0,0)
        \put(0,0){\includegraphics[height=7cm,clip]{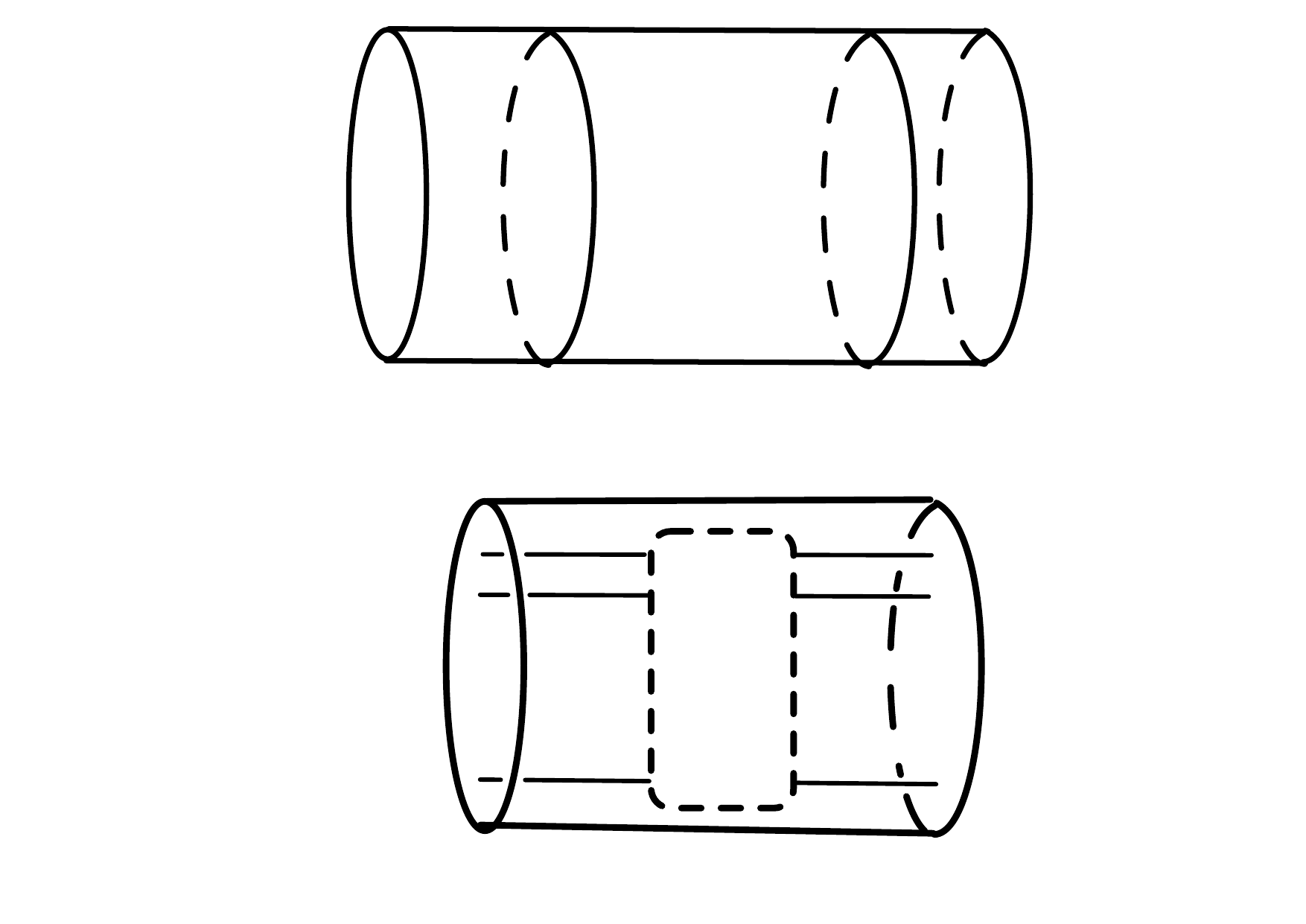}}

        \put(1.5,5.5){$N_{1} :$}

        \put(3.5,3.7){$W_{1}$}
            \put(3,7){$F_{1}$}
            \put(4,7){$F_{2}$}
        \put(5,5.5){$\cdots$}
        \put(7,3.7){$W_{l}$}
            \put(6.5,7){$F_{l}$}
            \put(7.5,7){$F_{l+1}$}

        \put(1.5,2){$W_{i} :$}
        \put(5.3,1.8){$\sigma_{z_{i}}^{m_{i}}$}
        \put(3,3){$1$}
        \put(3,2.5){$2$}
        \put(4.3,1.7){$\vdots$}
        \put(3,1){$n$}
            \put(3.5,0){$F_{i}$}
            \put(7,0){$F_{i+1}$}
        
     \end{picture}}
     \caption{$N_{1} = W_{1} \cup \cdots \cup W_{l}$.}
        \label{V1}
     \end{figure}

\begin{figure}[htbp]
        {\unitlength=1cm
    \begin{picture}(10,6)(0,1)
         \put(2,0){\includegraphics[height=8cm,clip]{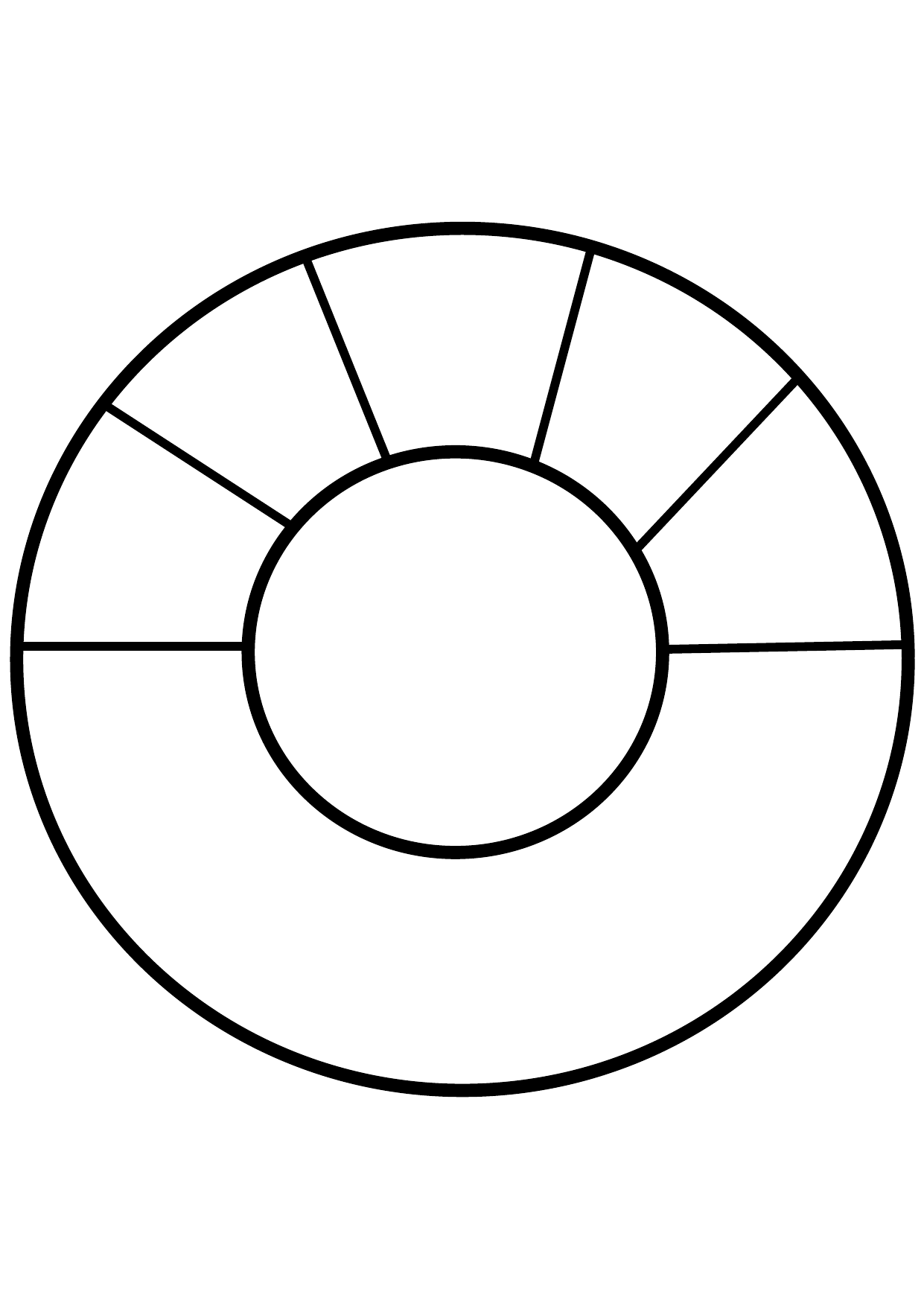}}

    \put(4.7,2.2){$R_{2}$}
         \put(2.7,4.5){$T_{1}$}
            \put(3.5,5.5){$\cdots$}
         \put(4.7,6){$T_{i}$}
            \put(5.7,5.5){$\cdots$}
         \put(6.6,4.5){$T_{l}$}

    \put(1.5,4){$\gamma_{1}$}
    \put(2,5.5){$\gamma_{2}$}
    \put(3.8,6.7){$\gamma_{i}$}
    \put(5.7,6.7){$\gamma_{i+1}$}
    \put(7.3,5.5){$\gamma_{l}$}
    \put(8,4){$\gamma_{l+1}$}

    \end{picture}}
     \caption{$A = R_{1} \cup R_{2}$ and $R_{1} = T_{1} \cup \cdots \cup T_{l}$.}
        \label{A}
\end{figure}

Let $F_{i}$ be the disk $\mathbb{D}^{2} \times \{i-1\}$ in $N_{1}=W_{1} \cup \cdots \cup W_{l}$ $(1 \le i \le l+1)$. Note that $F_{i} = W_{i-1} \cap W_{i}$ $(2 \le i \le l)$. For each $i$, let $\psi^{i} : F_{i} \to \gamma_{i}$ be the map which is obtained by regarding $F_{i}$ as $\mathbb{D}^{2}$ and considering a smooth height function on $\mathbb{D}^{2}$ shown in Figure~\ref{tegaki2}. Precisely, $\psi^{i}$ is the composite map of a diffeomorphism $F_{i} \to \mathbb{D}^{2}$ and a smooth height function on $\mathbb{D}^{2}$ shown in Figure~\ref{tegaki2}. In Figure~\ref{tegaki2}, the fat points on $F_{i}$ depict the local maxima of $\psi^{i}$. For the images of saddle points of $\psi^{i}$, their preimages on $F_{i}$ are figure-eight fibers under $\psi^{i}$. The Reeb graph $\tau_{i}$ is shown in the center of Figure~\ref{tegaki2}. The boundary $\partial F_i$ is mapped to an endpoint of $\gamma_i$.

    \begin{figure}[htbp]
        \setlength\unitlength{1truecm}
        \begin{picture}(15,08)(0,0)
            \put(0,0){\includegraphics[width=1\textwidth,clip]{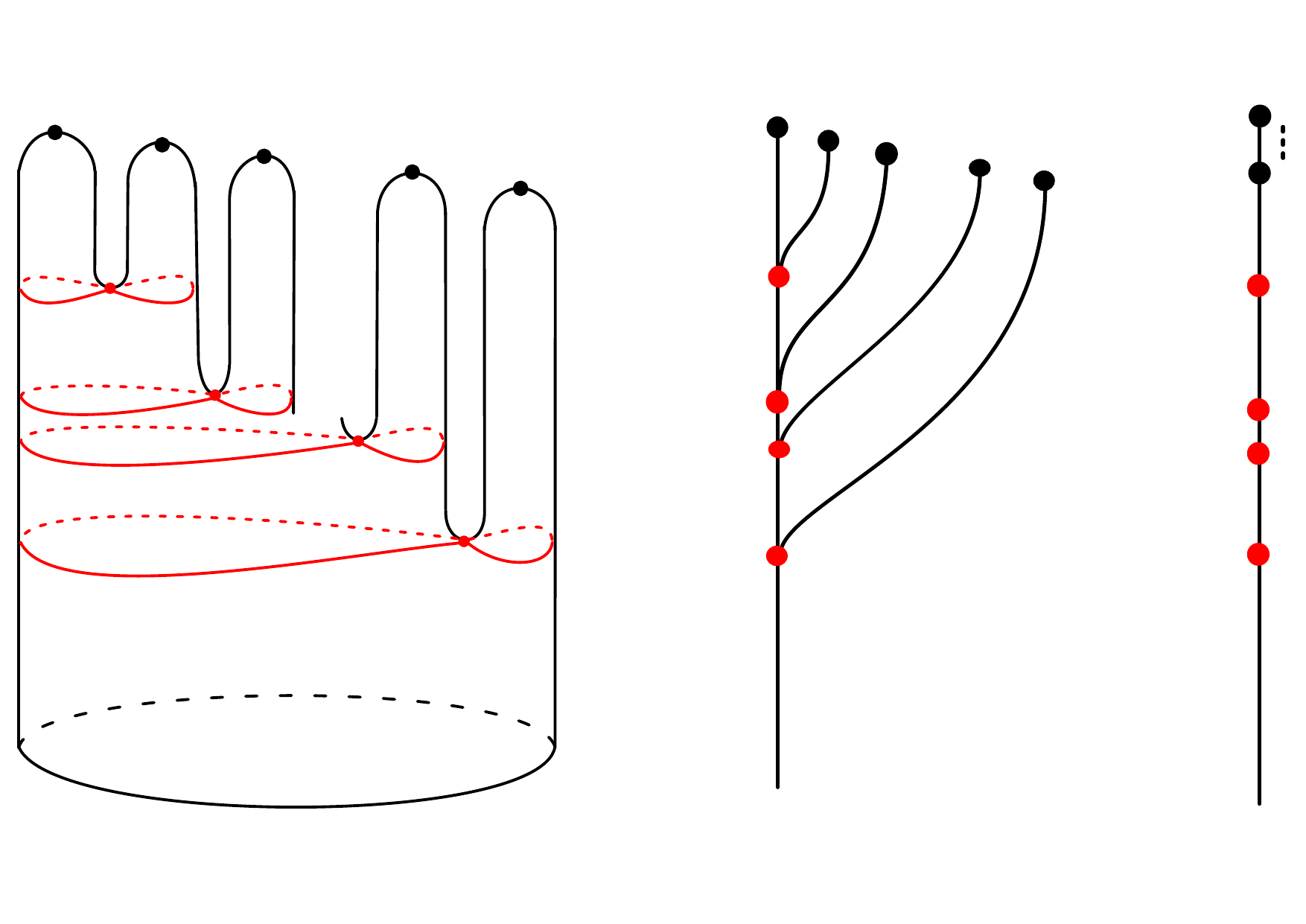}}
            \put(3,6){$\cdots$}
            \put(8.8,7.4){$\cdots$}

        \put(3,0.7){$F_{i}$}
            \put(1,5.7){$v_{1}$}
            \put(2,4.6){$v_{2}$}
            \put(3,4.3){$v_{n-1}$}
            \put(4.3,3.3){$v_{n}$}

                \put(6,4){$\longrightarrow$}
                
        \put(7.5,0.7){$\tau_{i}$}
            \put(7,6.2){$v_{1}$}
            \put(7,5){$v_{2}$}
            \put(6.7,4.5){$v_{n-1}$}
            \put(7,3.5){$v_{n}$}

                \put(10,4){$\longrightarrow$}

        \put(12,0.7){$\gamma_{i}$}
            \put(11,6.2){$\psi^{i}( v_{1} )$}
            \put(11,5){$\psi^{i}( v_{2} )$}
            \put(11.6,4.5){$\vdots$}
            \put(10.8,4.2){$\psi^{i}( v_{n-1} )$}
            \put(11,3.5){$\psi^{i}( v_{n} )$}

            \put(13,6.3){$\alpha$}
            \put(12.5,4.8){ $\stretchleftright{ \} }{\rule{0ex}{90pt}}{.}$ }
                
        \end{picture}
        \caption{A smooth map $\psi^{i} : F_{i} \to \gamma_{i}$ for $1 \leq i \leq l$. The saddle points $v_{1}, \cdots, v_{n}$ in the left later to the strands in the braid. See Figure~\ref{sigma}.}
        \label{tegaki2}
    \end{figure}


Consider the case that $z_{i}=1$ and $m_{i}$ is even. A smooth map $\Psi^{i} : F_{i} \times I = W_{i} \to \gamma_{i} \times I = T_{i}$ is obtained from an isotopy between $\psi^{i}$ and $\psi^{i+1}$ as follows. Let $v_{1}, \ldots, v_{n}$ be the saddle points on $F_{i}$ for $\psi^{i}$ and $\alpha$ a left-open interval in $\gamma_{i}$ containing $\psi^{i} (v_{1})$, $\psi^{i}(v_{2})$ and $\max \mathrm{Im} \psi^{i}$. Then, the isotopy $F_{i} \times [0,1] \to F_{i}$ is given by $m_{i}$ half-twists on the connected component of the preimage of $\alpha$ containing $v_{1}, v_{2}$. See Figure~\ref{isoeven1} for the isotopy from $\psi^{i}$ to $\psi^{i+1}$, and see Figure~\ref{even1} for $\Psi^{i}$. Note that the images of definite fold points and indefinite fold points have no normal crossings in this case. In Figure~\ref{even1}, the image of arcs in $W_{i}$ is the fat segments in $T_{i}$. The image of the set of definite fold points is the parallel segments in $T_{i}$.

    \begin{figure}[htbp]
        \setlength\unitlength{1truecm}
        \begin{picture}(15,7)(0,1)
            \put(0,0){\includegraphics[width=1\textwidth,clip]{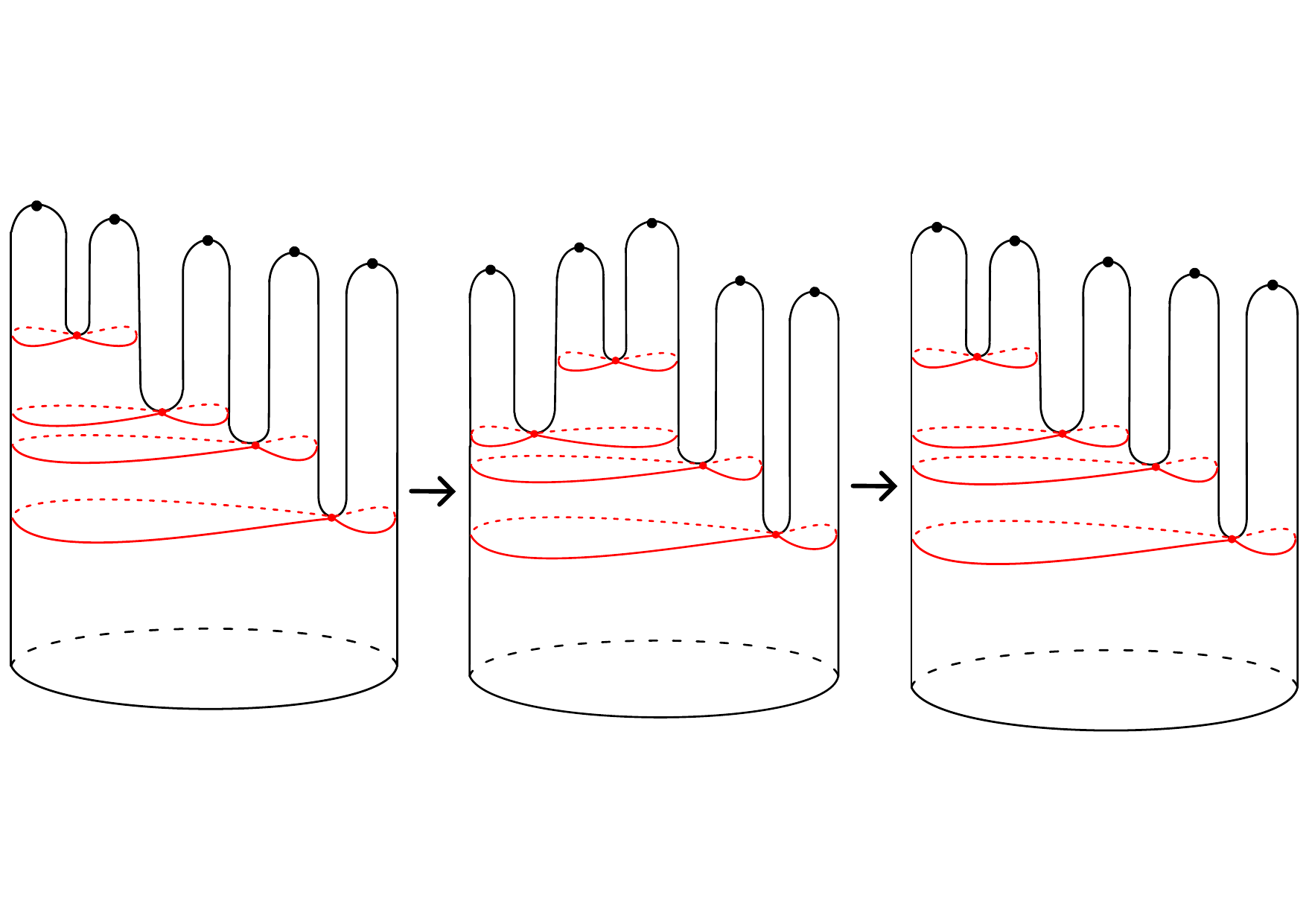}}    
            \put(0.7,5.3){$v_{1}$}
            \put(1.5,4.6){$v_{2}$}
            \put(2.3,4.3){$v_{3}$}
            \put(3.2,3.5){$v_{4}$}

            \put(5.8,5.1){$v_{1}$}
            \put(5,4.4){$v_{2}$}
            \put(6.6,4){$v_{3}$}
            \put(7.4,3.3){$v_{4}$}

            \put(9.4,5.1){$v_{1}$}
            \put(10.3,4.4){$v_{2}$}
            \put(11,4.1){$v_{3}$}
            \put(11.8,3.3){$v_{4}$}
        \end{picture}
        \caption{$m_{i}=2$}
        \label{isoeven1}
    \end{figure}

    \begin{figure}[htbp]
        \setlength\unitlength{1truecm}
        \begin{picture}(15,7.5)(0,1)
            \put(0,0){\includegraphics[width=1\textwidth,clip]{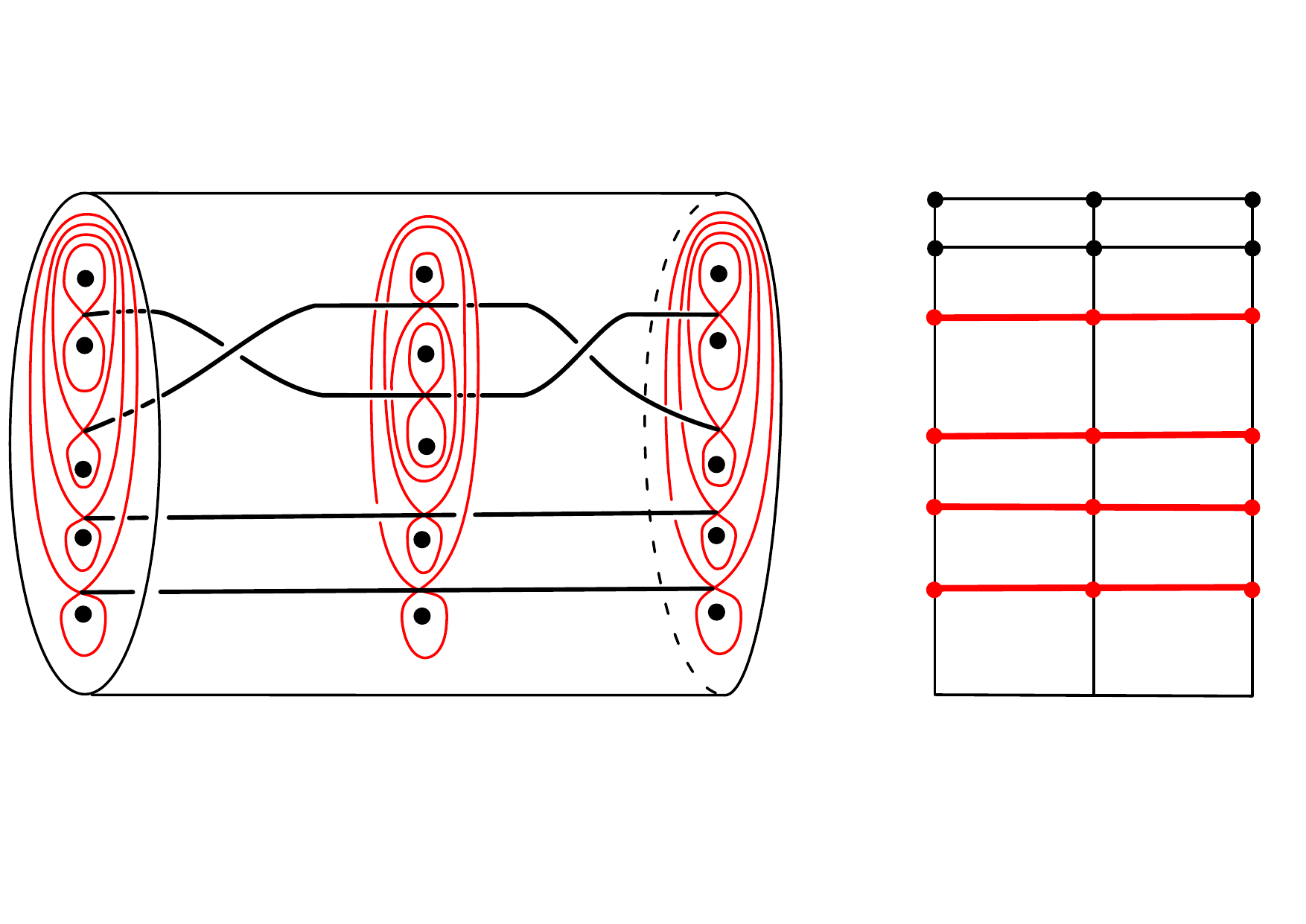}}

            \put(4,1.5){$W_{i}$}
                \put(-0.5,6){$v_{1}$}
                \put(-0.5,5){$v_{2}$}
                \put(-0.5,4){$v_{3}$}
                \put(-0.5,3){$v_{4}$}
            \put(8,5){$\longrightarrow$}
            \put(10.5,1.5){$T_{i}$}
                \put(11.25,6.65){$\vdots$}
                \put(9.6,6.65){$\vdots$}
                
        \end{picture}
        \caption{This case is $n=4$ and $m_{i}=2$.}
        \label{even1}
    \end{figure}


Consider the case that $z_{i}=1$ and $m_{i}$ is odd. A smooth map $\Psi^{i} : F_{i} \times I = W_{i} \to \gamma_{i} \times I = T_{i}$ is obtained by connecting three isotopies as follows. The first isotopy is identical to the one used in the case where $z_{i}=1$ and $m_{i}$ is even. The second isotopy is given by using Lemma~\ref{sub1}. See Figure~\ref{uselemma}. Note that the isotopy switches the heights of saddle points $v_{1}, v_{2}$ on $\mathbb{D}^{2}$. The third isotopy is defined by switching the heights of pairs among the three maxima as shown in Figure~\ref{switch}. The third one induces the switching of the images of the three uppermost maxima of $\mathbb{D}^{2}$ in the target. For the obtained smooth map $\Psi^{i}$ by connecting three isotopies, see Figure~\ref{odd1}. Note that the image of indefinite fold points, which is isotopic to $L \cap W_{i}$, has a normal crossing, and the image of definite fold points has three normal crossings when $m_{i}$ is odd. In Figure~\ref{odd1}, the image of arcs in $W_{i}$ is the fat arcs in $T_{i}$. The image of the set of definite fold points is braided with the upper arcs in $T_{i}$.

    \begin{figure}[htbp]
    \setlength\unitlength{1truecm}
        \begin{picture}(15,9)(0,0)
        \centering
        \put(0,0){\includegraphics[width=1\linewidth]{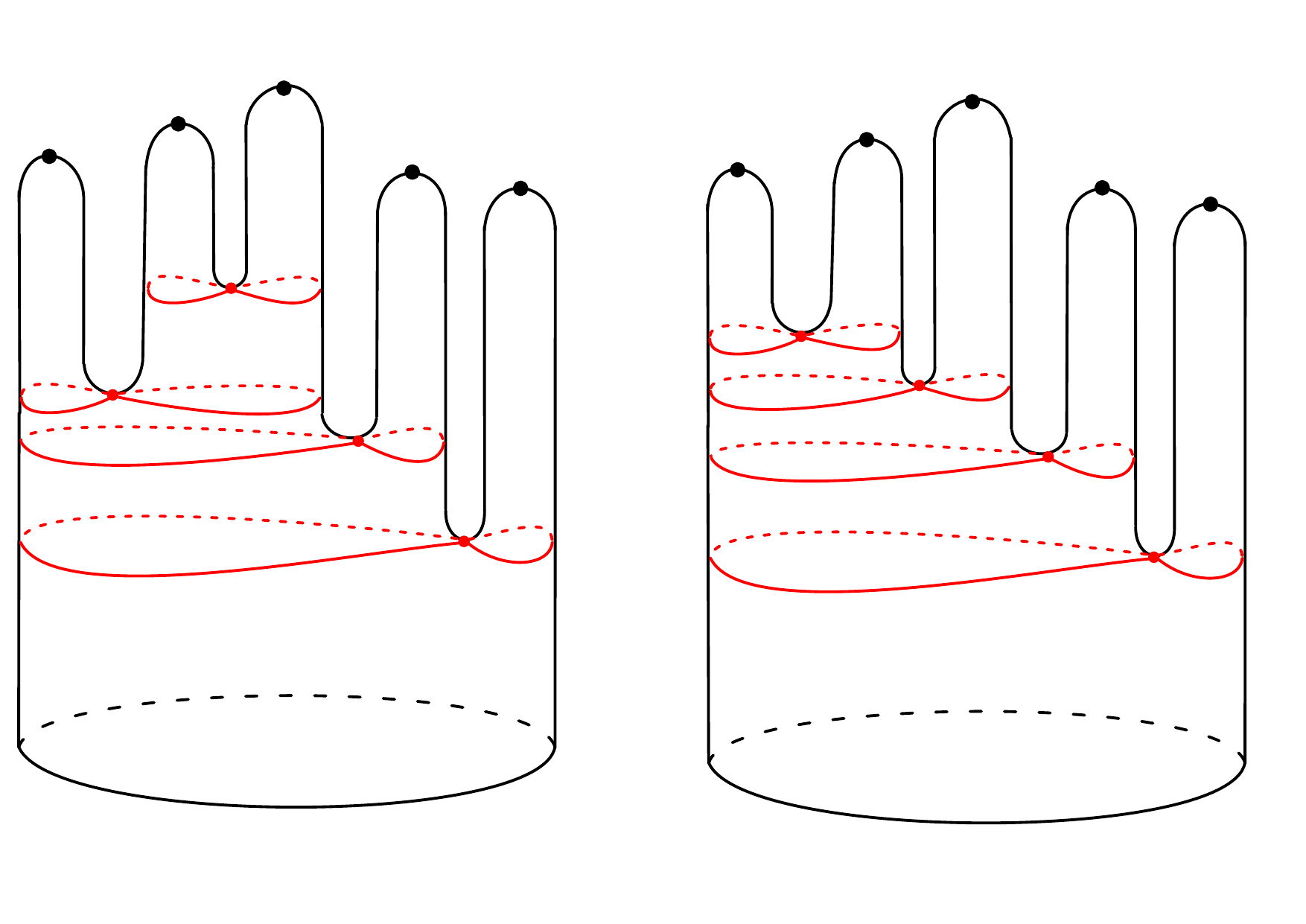}}

        \put(2,5.5){$v_{1}$}
        \put(1,4.5){$v_{2}$}
        \put(3.3,4){$v_{3}$}
        \put(4.2,3){$v_{4}$}
        
        \put(5.8,5){$\longrightarrow$}

        \put(7.8,5.3){$v_{1}$}
        \put(9,4.7){$v_{2}$}
        \put(10,4){$v_{3}$}
        \put(11,3){$v_{4}$}
        
        \end{picture}
        \caption{The isotopy is defined by Lemma~\ref{sub1}.}
        \label{uselemma}
    \end{figure}

    \begin{figure}[htbp]
        \setlength\unitlength{1truecm}
        \begin{picture}(15,9)(0,0)
            \put(0,0){\includegraphics[width=1\textwidth,clip]{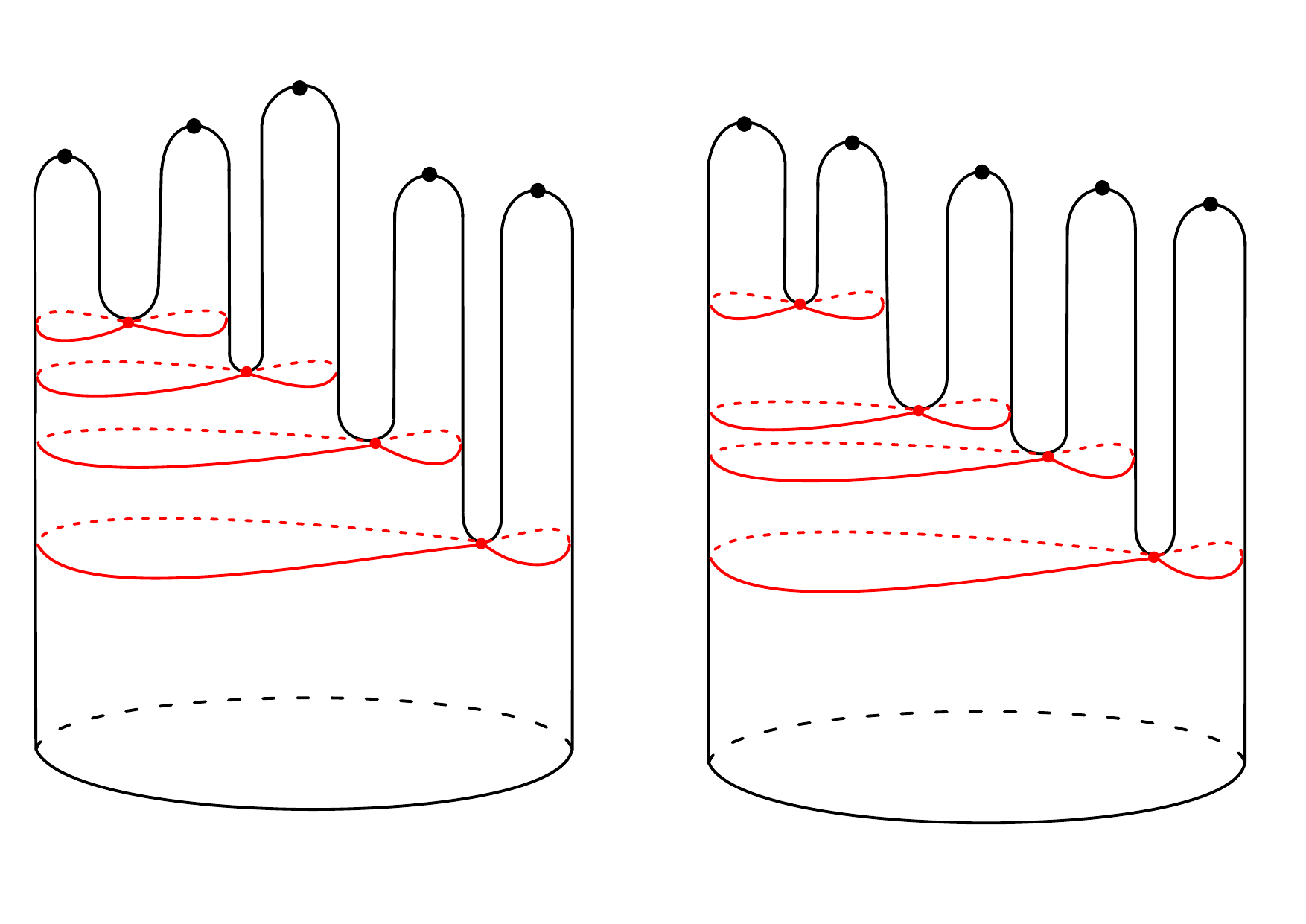}}

            \put(1,5.5){$v_{1}$}
            \put(2,5){$v_{2}$}
            \put(3.3,4){$v_{3}$}
            \put(4.2,3){$v_{4}$}


        \put(5.8,5){$\longrightarrow$}

            \put(7.8,5.3){$v_{1}$}
            \put(9,4.7){$v_{2}$}
            \put(10,4){$v_{3}$}
            \put(11,3){$v_{4}$}
        
            
        \end{picture}
        \caption{The switching of local maxima.}
        \label{switch}
    \end{figure}

    \begin{figure}[htbp]
        \setlength\unitlength{1truecm}
        \begin{picture}(15,7)(0,1)
            \put(0,0){\includegraphics[width=1\textwidth,clip]{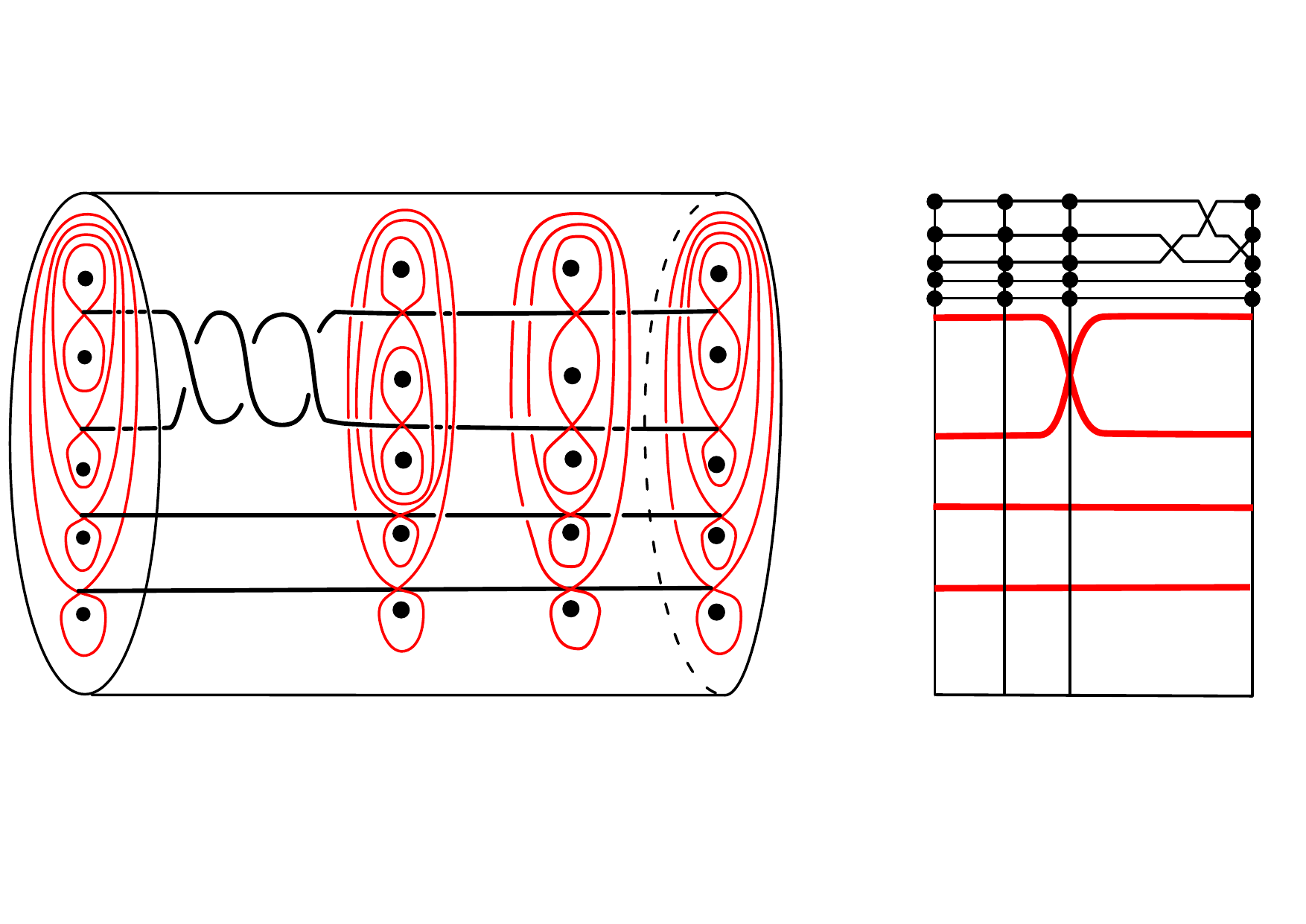}}

            \put(4,1.5){$W_{i}$}
                \put(-0.5,6){$v_{1}$}
                \put(-0.5,5){$v_{2}$}
                \put(-0.5,4){$v_{3}$}
                \put(-0.5,3){$v_{4}$}
            \put(8,5){$\longrightarrow$}
            \put(10.5,1.5){$T_{i}$}
                
        \end{picture}
        \caption{This case is $n=4$, $z_{i}=1$ and $m_{i}=3$.}
        \label{odd1}
    \end{figure}


Consider the case that $z_{i} \neq 1$ and $m_{i}$ is even. A smooth map $\Psi^{i}: F_{i} \times I = W_{i} \to \gamma_{i} \times I = T_{i}$ is obtained by connecting three isotopies as follows. First, we consider an isotopy given by Lemma~\ref{sub1}, which reorders the heights of the three saddle points $v_{i-1}, v_{i}, v_{i+1}$ of $\psi^{i}$ as shown in Figure~\ref{reorder}. This is performed by first swapping $v_{i}$ and $v_{i+1}$, then $v_{i-1}$ and $v_{i+1}$, and finally $v_{i-1}$ and $v_{i}$. Second, similarly to the case where $z_{i}=1$, we consider an isotopy consisting of $m_{i}$ half-twists on the disk. Let $\beta$ a left-open interval in $\gamma_{i}$ containing $\psi^{i}(v_{i})$, $\psi^{i}(v_{i+1})$ and $\max \mathrm{Im} \psi^{i}$. The isotopy $F_{i} \times [0,1] \to F_{i}$ is given by $m_{i}$ half-twists on the connected component of the preimage of $\beta$ containing $v_{i}, v_{i+1}$. Finally, we consider an isotopy which reorders saddle points $ v_{i-1}, v_{i}, v_{i+1}$ again. The inverse of the isotopy shown in Figure~\ref{reorder} restores the three saddle points to their original heights. For the obtained smooth map $\Psi^{i}$ by connecting three isotopies, see Figure~\ref{evenne1}. Note that the image of the indefinite fold points has six normal crossings and the image of definite fold points has no normal crossings. Also, the smooth map $ \Psi^{i} : W_{i} \to T_{i}$ has four singular fibers of type $\mathrm{I\hspace{-1.2pt}I^{2}}$.

    \begin{figure}[htbp]
        \setlength\unitlength{1truecm}
        \begin{picture}(15,14)(0,0)
            \put(-1.2,1){\includegraphics[width=1.3\textwidth,clip]{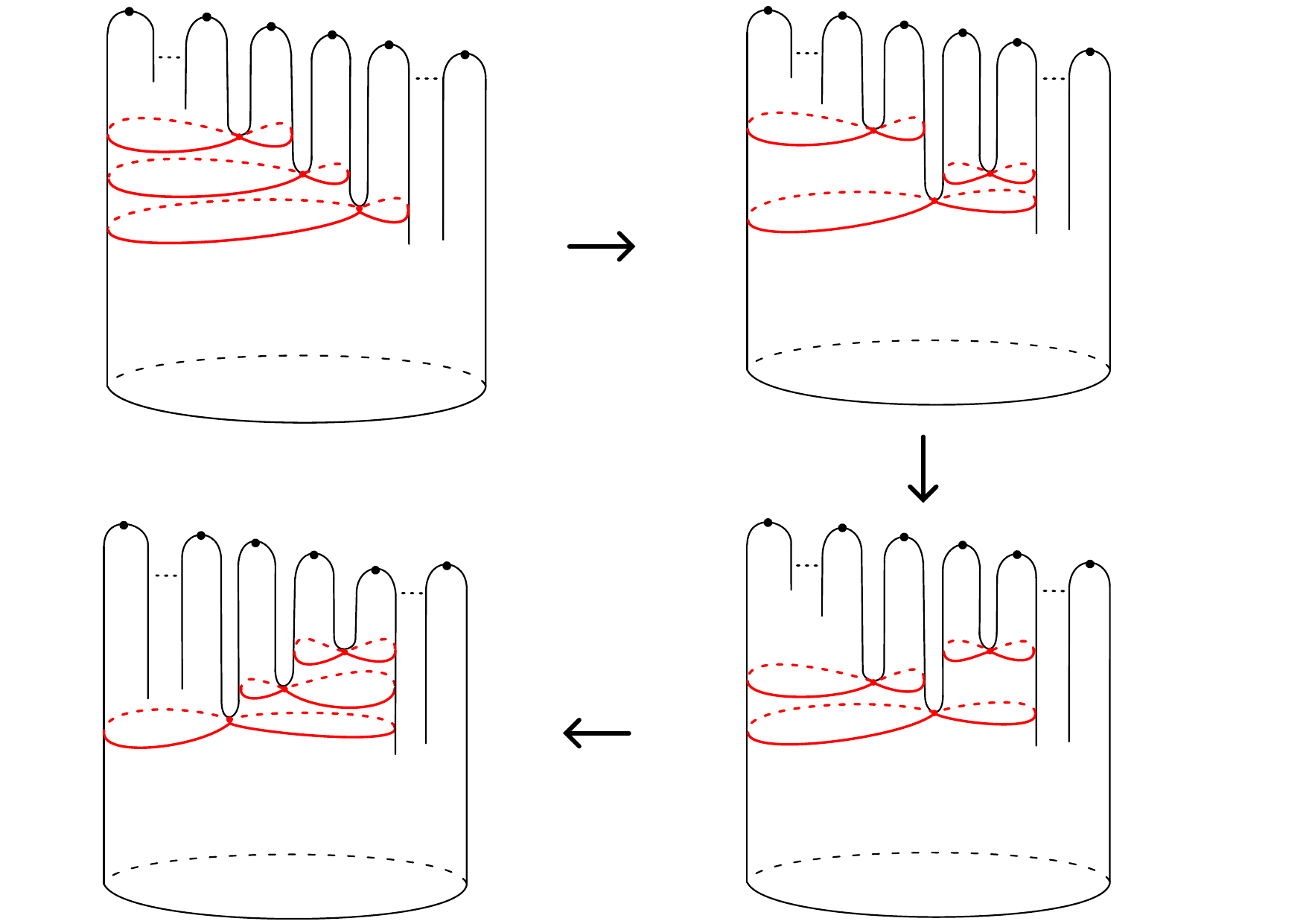}}

            \put(1.5,10.5){$v_{i-1}$}
            \put(2.5,10){$v_{i}$}
            \put(3,9.5){$v_{i+1}$}

            \put(9.5,10.5){$v_{i-1}$}
            \put(10.3,9.7){$v_{i}$}
            \put(11.1,10){$v_{i+1}$}

            \put(9.5,3.6){$v_{i-1}$}
            \put(10.4,3.2){$v_{i}$}
            \put(11,4){$v_{i+1}$}

            \put(1.5,3){$v_{i-1}$}
            \put(2.3,3.5){$v_{i}$}
            \put(3,4){$v_{i+1}$}
           
        \end{picture}
        \caption{Saddle points $v_{i-1}, v_{i}, v_{i+1}$ are reordered.}
        \label{reorder}
    \end{figure}

    \begin{figure}[htbp]
        \setlength\unitlength{1truecm}
        \begin{picture}(15,15)(0,1)
            \put(0,7){\includegraphics[width=1\textwidth,clip]{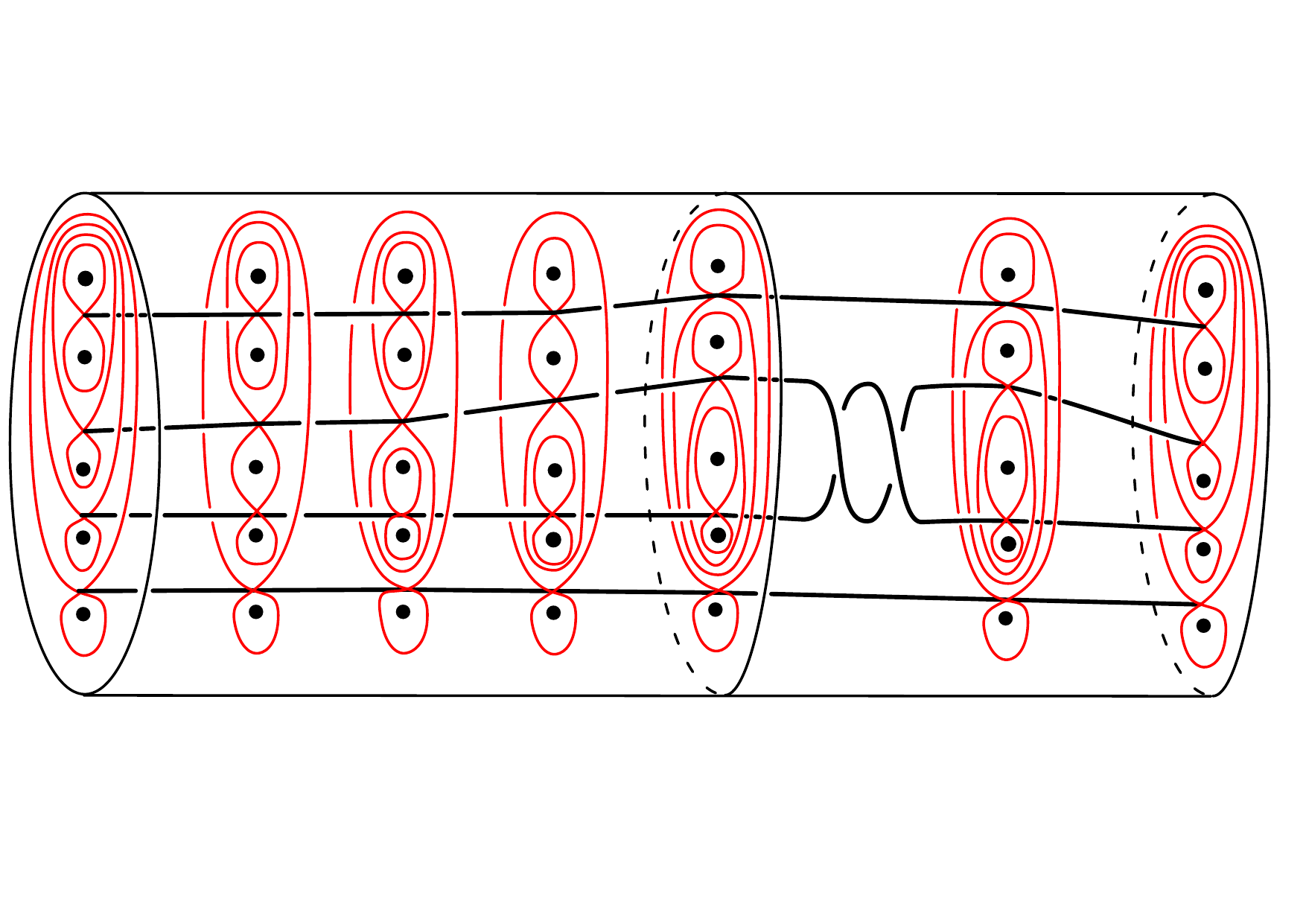}}
            \put(5,8){$W_{i}$}
                \put(-0.5,13){$v_{1}$}
                \put(-0.5,12){$v_{2}$}
                \put(-0.5,11){$v_{3}$}
                \put(-0.5,10){$v_{4}$}
                \put(3,4){$\longrightarrow$}
            \put(2,0){\includegraphics[width=1\textwidth,clip]{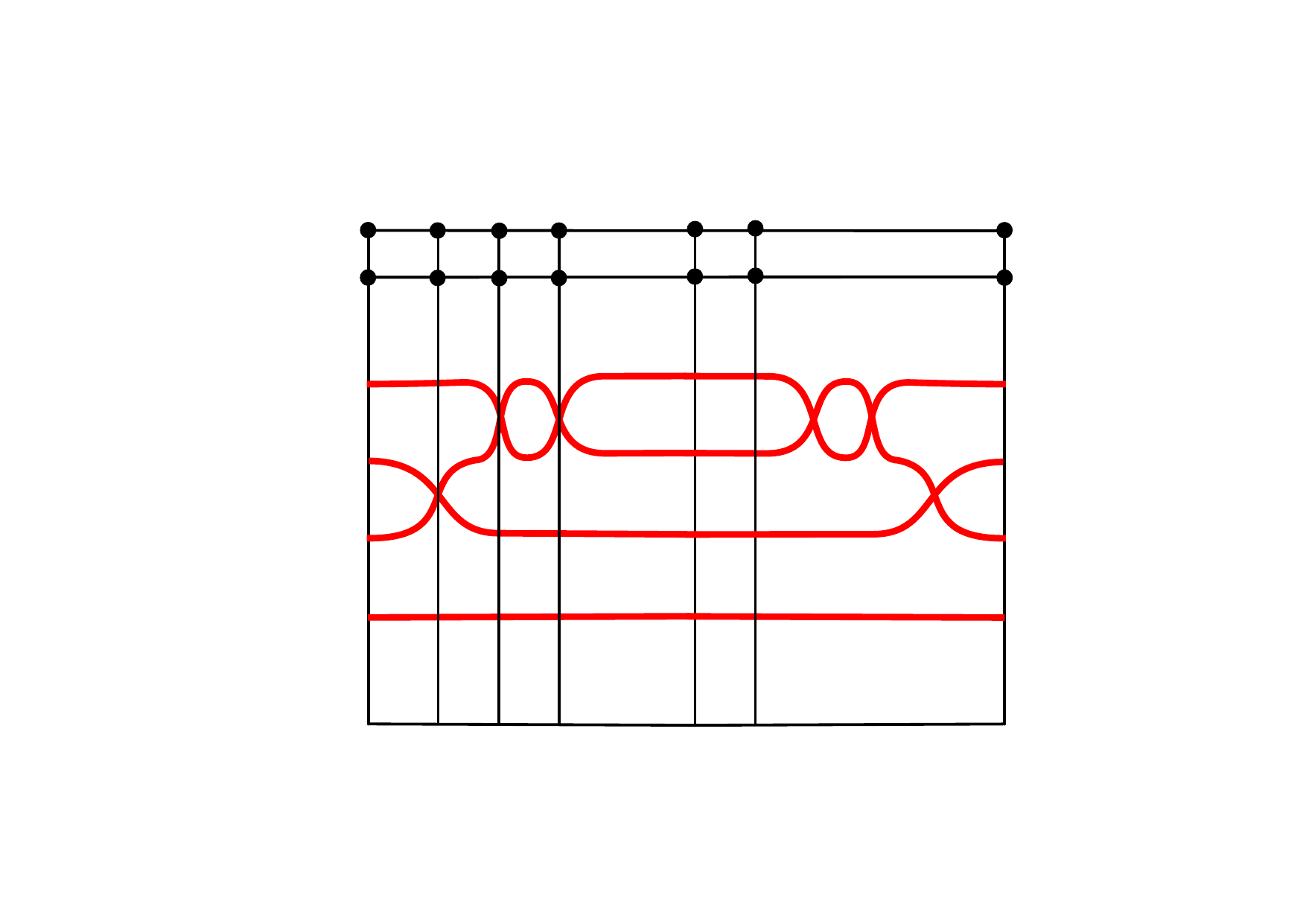}}

            \put(10.5,6.35){$\vdots$}

        \end{picture}
        \caption{This case is $n=4$, $z_{i} = 2$ and $m_{i}=2$.}
        \label{evenne1}
    \end{figure}


Consider the case that $z_{i} \neq 1$ and $m_{i}$ is odd. A smooth map $\Psi^{i} : F_{i} \times I = W_{i} \to \gamma_{i} \times I = T_{i}$ is obtained by connecting five isotopies as follows. The first isotopy is identical to the one used in the case where $z_{i} \ne 1$ and $m_{i}$ is even. The second isotopy is identical to the one used in the case where $z_{i} \ne 1$ and $m_{i}$ is even. The third isotopy is an isotopy given by using Lemma~\ref{sub1}, which reorders two saddle points $v_{i}, v_{i+1}$ of $\psi^{i}$. See Figure~\ref{reorderne1}. The fourth isotopy is the inverse of the isotopy shown in Figure~\ref{reorder}, which restores the three saddle points to their original heights by reordering them again. The fifth isotopy is given by switching the height of the definite fold points as shown in Figure~\ref{swichne1}. For the obtained smooth map $\Psi^{i}$ by connecting five isotopies, see Figure~\ref{oddne1}. Note that the image of the definite fold points has three normal crossings, and the image of indefinite fold points has seven normal crossings. The smooth map $\Psi^{i} : W_{i} \to T_{i}$ has five type $\mathrm{I\hspace{-1.2pt}I^{2}}$ singular fibers.

    \begin{figure}[htbp]
        \setlength\unitlength{1truecm}
        \begin{picture}(15,8.5)(0,0)
            \put(0,0){\includegraphics[width=1\textwidth,clip]{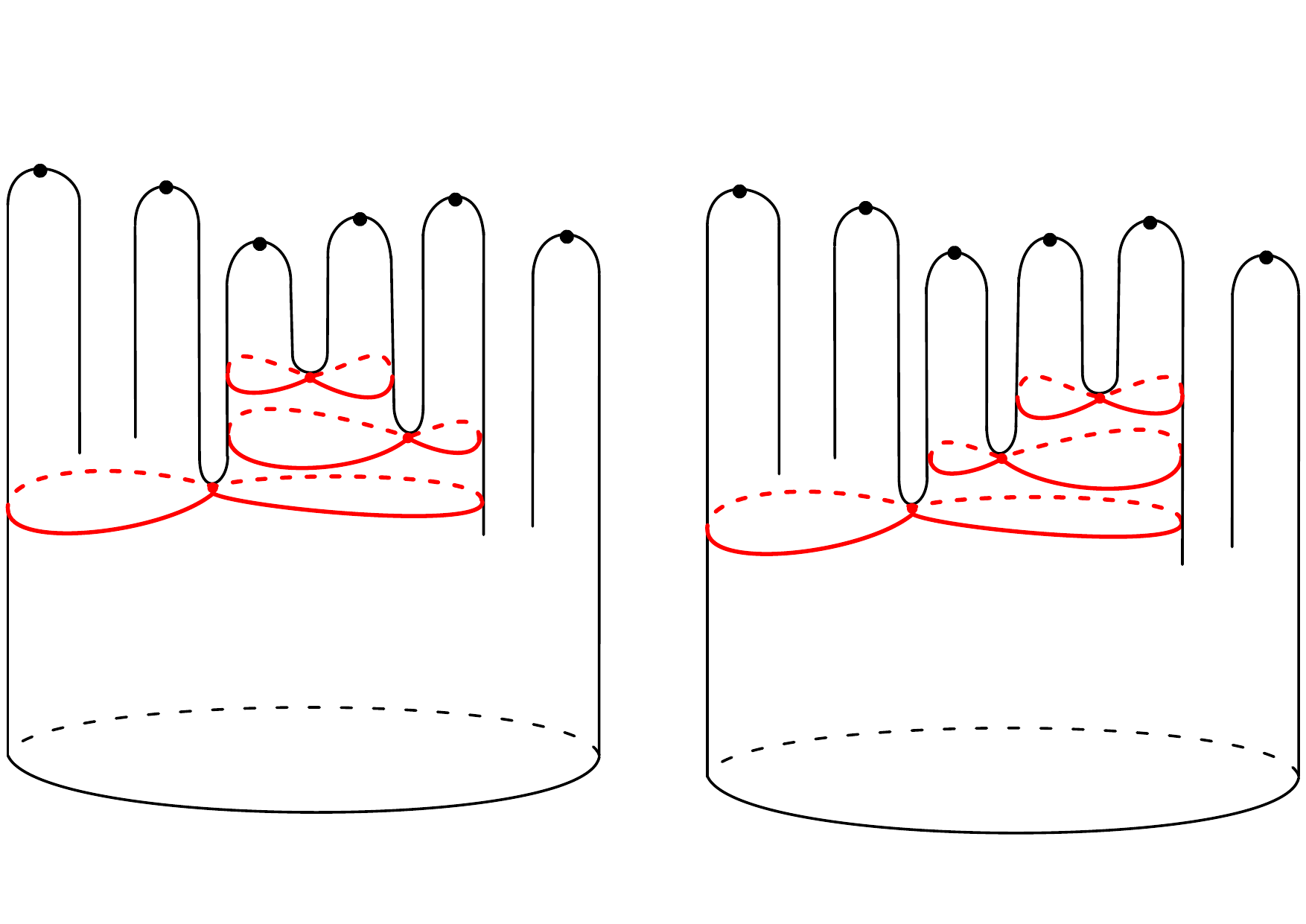}}

            \put(2,3.7){$v_{i-1}$}
            \put(3.7,4.4){$v_{i}$}
            \put(3,5){$v_{i+1}$}

            \put(6,4.5){$\longrightarrow$}

            \put(8.5,3.7){$v_{i-1}$}
            \put(10.5,4.7){$v_{i}$}
            \put(9.7,4.2){$v_{i+1}$}
            
        \end{picture}
        \caption{Reordering saddle points $v_{i}, v_{i+1}$.}
        \label{reorderne1}
    \end{figure}

    \begin{figure}[htbp]
        \setlength\unitlength{1truecm}
        \begin{picture}(15,8.5)(0,0)
            \put(0,0){\includegraphics[width=1\textwidth,clip]{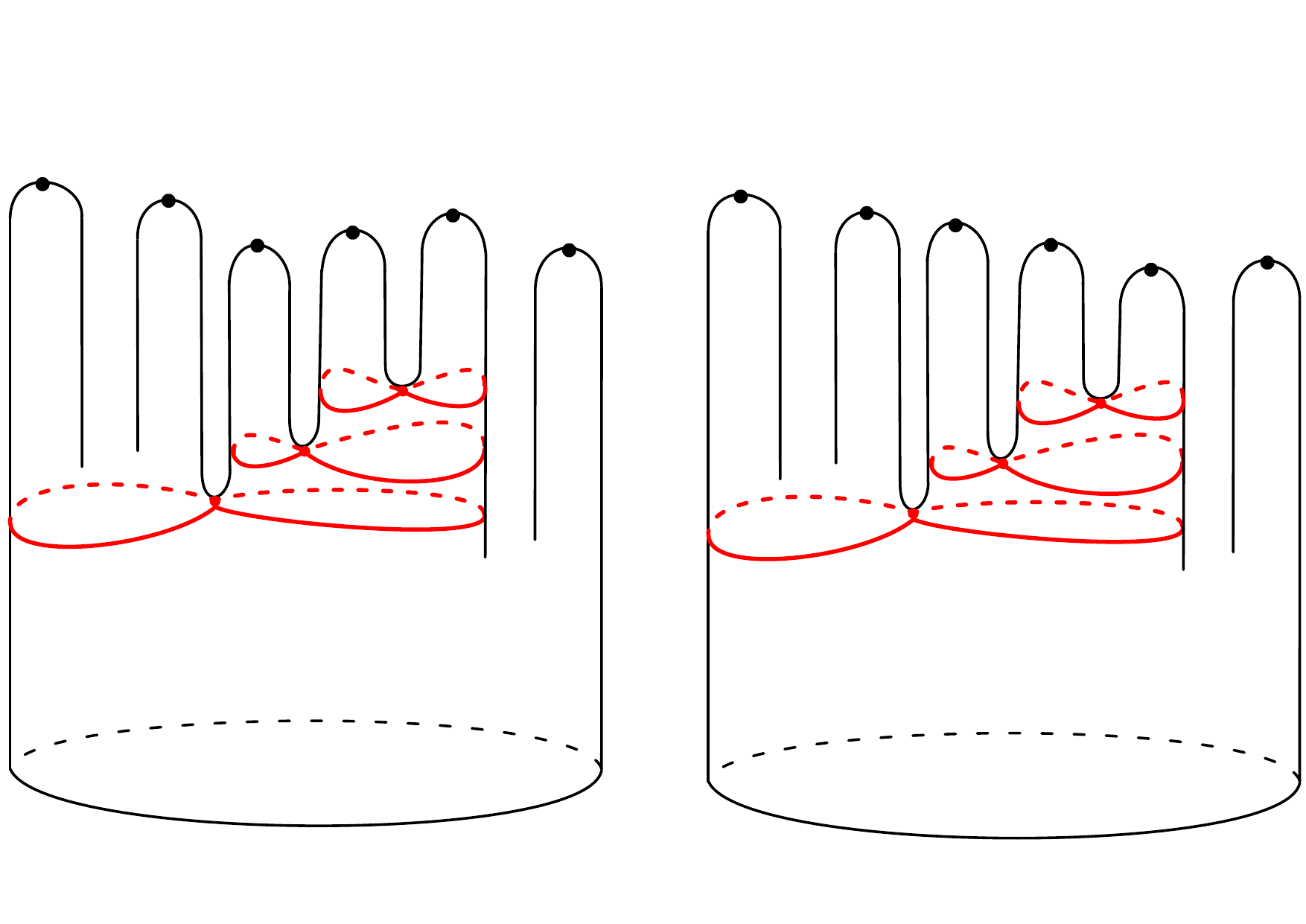}}

            \put(2,3.7){$v_{i-1}$}
            \put(3.5,4.7){$v_{i}$}
            \put(3,4.2){$v_{i+1}$}

            \put(6,4.5){$\longrightarrow$}

            \put(8.5,3.7){$v_{i-1}$}
            \put(10.5,4.7){$v_{i}$}
            \put(9.7,4.2){$v_{i+1}$}
            
        \end{picture}
        \caption{Switching the definite fold points.}
        \label{swichne1}
    \end{figure}

    \begin{figure}[htbp]
        \setlength\unitlength{1truecm}
        \begin{picture}(15,12)(0,0)
            \put(0,5){\includegraphics[width=1\textwidth,clip]{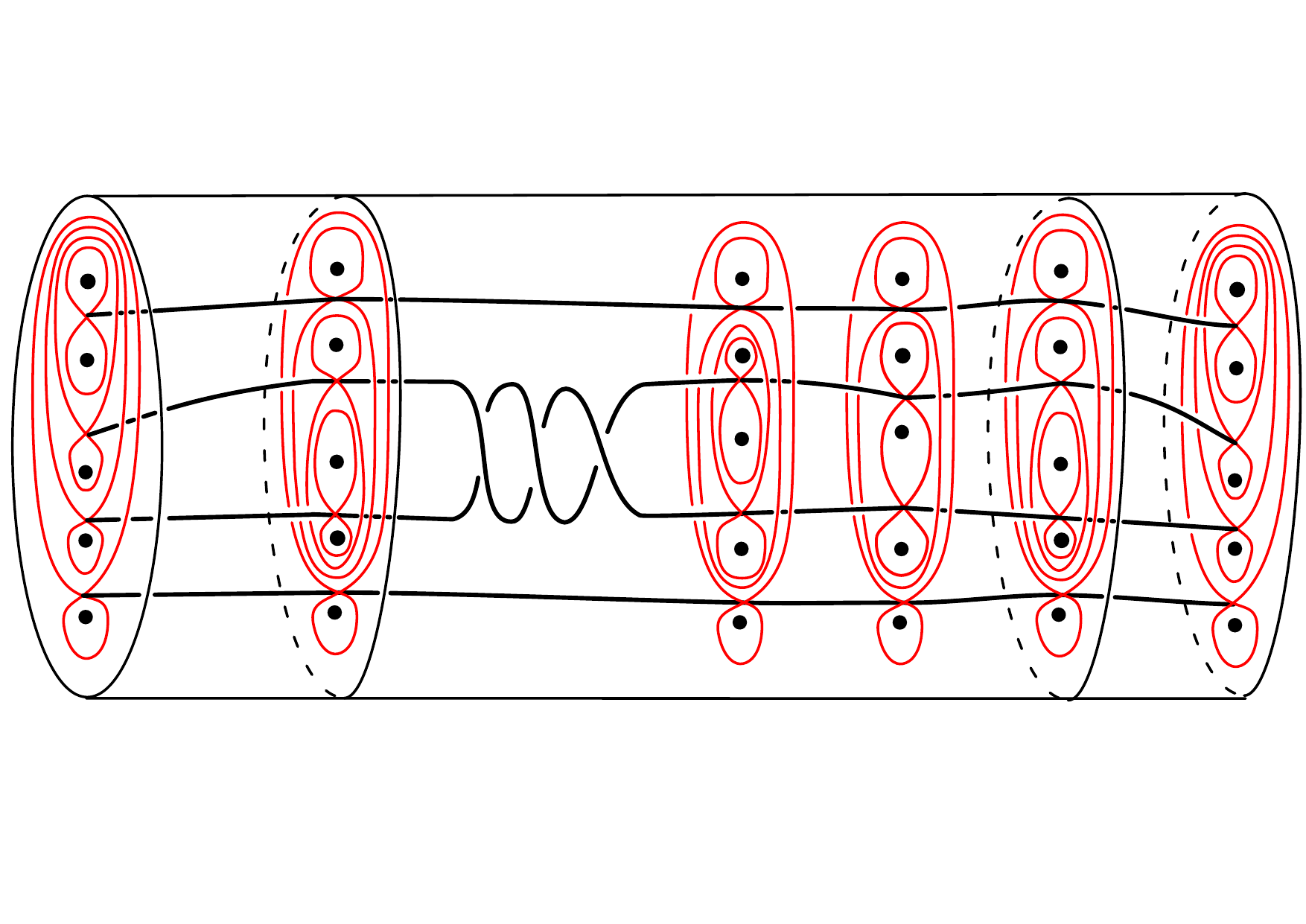}}
            \put(5,6.5){$W_{i}$}
                \put(-0.5,11){$v_{1}$}
                \put(-0.5,10){$v_{2}$}
                \put(-0.5,9){$v_{3}$}
                \put(-0.5,8){$v_{4}$}
                \put(3,3){$\longrightarrow$}
            \put(2,-1){\includegraphics[width=1\textwidth,clip]{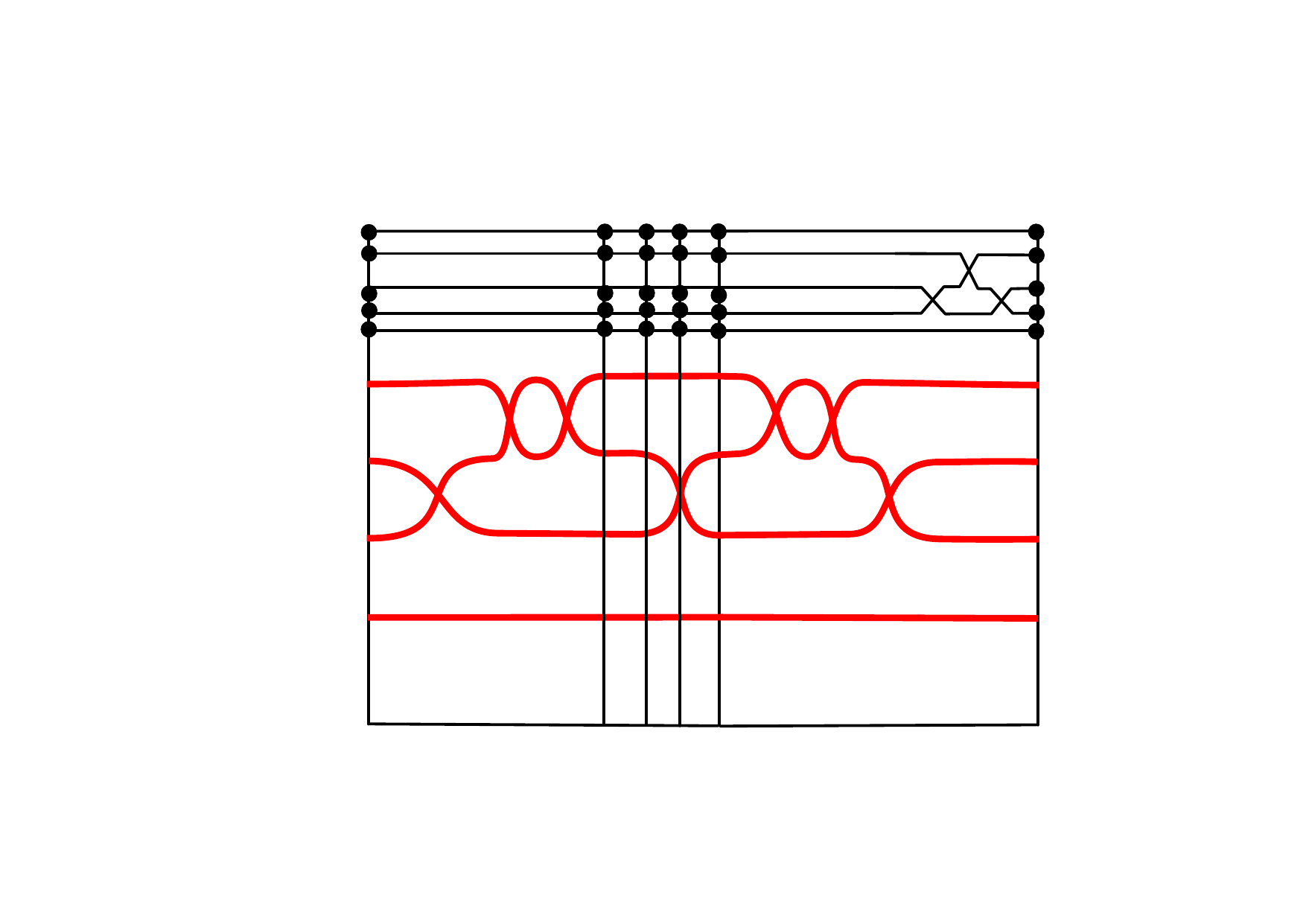}}
        \end{picture}
        \caption{This case is $n=4$, $z_{i} = 2 $ and $m_{i}=3$.}
        \label{oddne1}
    \end{figure}


By connecting $\Psi^{1} : W_{1} \to T_{1}, \ldots, \Psi^{l} : W_{l} \to T_{l}$ along $\psi^{2} : F_{2} \to \gamma_{2}, \ldots, \psi^{l} : F_{l} \to \gamma_{l}$, we obtain a smooth map from the solid cylinder $N_{1}$ into the region $R_{1}$. We connect this map with a smooth map from a solid cylinder $N_{2}$ into the region $R_{2}$ naturally induced from $\psi^{1} : F_{1} \to \gamma_{1}$ and $\psi^{l+1} : F_{l+1} \to \gamma_{l+1}$ by the product structure of $N_{2}$. As a result, a smooth map $g_{1} : V_{1} \to A$ is obtained, where $A$ is an annulus. Further, connecting $g_{1}$ and the natural projection $g_{2} : V_{2} \to E$, we finally obtain a smooth map $f : S^{3} \to A \cup E \subset \mathbb{R}^{2}$. By construction, this map $f$ has no cusp points, and its singular set consists of definite and indefinite fold points. Also, $f$ satisfies the global conditions (5) and (6). Moreover, the set of indefinite fold points forms a link isotopic to $L$ and $f$ has no singular fibers of type $\mathrm{I\hspace{-1.2pt}I^{3}}$. Consequently, $f : S^{3} \to \mathbb{R}^{2}$ is a stable map without cusp points such that $f$ has no singular fibers of type $\mathrm{I\hspace{-1.2pt}I^{3}}$ and $S_{1}(f)$ is isotopic to $L$.

\end{proof}




\begin{proof}[Proof of Corollary~\ref{c1}]

Let $M$ be a closed orientable $3$-manifold and $L$ a link in $M$. It is known that $M$ is obtained by integral surgery on the closure $\hat{B}$ of a pure braid $B$ in $S^{3}$. See \cite[p.115]{Lamb-Rourke1997}. Let $L_{0}$ be a link in $S^{3} \backslash \hat{B}$ which is isotopic to $L$ in $M$ after surgery on $\hat{B}$. Then, by using \cite[Theorem 5.3]{Lamb-Rourke1997}, there exists a braid $B_{0} \cup B$ such that the closure $\hat{B_{0}} \cup \hat{B}$ is isotopic to $L_{0} \cup \hat{B}$. For this $\hat{B_{0}} \cup \hat{B}$, by using our construction in \cite[Theorem 1.1]{K2025ar}, we obtain a stable map $f : S^{3} \to \mathbb{R}^{2}$ without cusp points such that $f$ has no singular fibers of type $\mathrm{I\hspace{-1.2pt}I^{3}}$ and $S_{0} (f)$ is isotopic to $\hat{B_{0}} \cup \hat{B}$.


Suppose that $\hat{B} = K_{1} \cup \cdots \cup K_{l}$ in $S^{3}$. Let $E(K_{i})$ denote the exterior of $K_{i}$, defined as $S^{3} \setminus \text{int}\,N(K_{i})$ and let $U_{i}$ be a solid torus. By integral Dehn surgery on $\hat{B}$ to obtain $M$, $U_{i}$'s are glued to $E(K_{i}) \subset S^{3}$. We remark that each longitude of $U_{i}$ is identified with a longitude of $N(K_{i})$ under the integral Dehn surgery, where $i \in \{ 1, \ldots, l \}$. Then, a smooth map $f : M \to S^{2}$ is obtained by connecting $f \mid_{E(K_{i})}$ and the natural projections $h_{i} : U_{i} \to \mathbb{D}^{2}$ with $h_{i} (\partial U_{i}) = f(\partial N(K_{i}))$. This map $f$ has no cusp points, no singular fibers of type $\mathrm{I\hspace{-1.2pt}I^{3}}$ and $S_{0} (f)$ is isotopic to $L_{0}$ by construction of $f$. Moreover, since B is a pure braid, $f$ satisfies the global conditions (5) and (6).  Thus, we obtain a stable map $f : M \to S^{2}$ such that $S_{0} (f)$ is isotopic to $L_{0}$.

\end{proof}


\section{The proof of Theorem~\ref{main2}}
\label{s3}


From the construction of the stable map in Theorem~\ref{main1}, we obtain Lemma~\ref{sub2}.

\begin{lemma}
    Let $W$ be a solid cylinder $\mathbb{D}^{2} \times [0,1]$ and $b_{1}$ any $n$-braid in $W$. Then, there exists a smooth map $h_{1} : W \to \mathbb{R}^{2}$ satisfying the following. The set of the definite fold points of $h_{1}$ represents some $(n+1)$-braid in $W$, and the set of the indefinite fold points of $h_{1}$ is isotopic to $b_{1}$. Moreover, $h_{1} |_{ \mathbb{D}^{2} \times \{ \varepsilon \} }$ is a Morse function determined by the height function as shown in Figure~\ref{bound.mu} for $\varepsilon =0,1$.
    \label{sub2}
\end{lemma}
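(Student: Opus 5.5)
The plan is to extract the local construction from the proof of Theorem~\ref{main1} and apply it to the single cylinder $W$. First, write $b_{1}$ as a braid word $\sigma_{z_{1}}^{m_{1}} \cdots \sigma_{z_{l}}^{m_{l}}$ with $z_{j-1} \neq z_{j}$ and $m_{j} \neq 0$. If $b_{1}$ is trivial, take the word $\sigma_{1}\sigma_{1}^{-1}$; alternatively, use the product map $\psi \times \mathrm{id}$ directly. Then subdivide $W = \mathbb{D}^{2}\times[0,1]$ into $l$ sub-cylinders $W_{1},\ldots,W_{l}$ with cross-sections $F_{1},\ldots,F_{l+1}$. On each $F_{j}$ we put the same height function $\psi^{j}$ as in Figure~\ref{tegaki2}. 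This function has $n$ saddle points $v_{1},\ldots,v_{n}$ corresponding to the strands, together with its local maxima. Since the boundary fibre is a single circle, an Euler characteristic (or Reeb graph) count shows that $n$ saddles on a disk require exactly $n+1$ local maxima. This is where the $(n+1)$ in the statement comes from. We identify this Morse function with the one in Figure~\ref{bound.mu}.

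Second, on each $W_{j}$ we take the map $\Psi^{j}$ constructed in the proof of Theorem~\ref{main1}. The construction depends on which of the four cases holds: $z_{j}=1$ or $z_{j}\neq 1$, and $m_{j}$ even or odd. In every case $\Psi^{j}$ comes from an isotopy of Morse functions from $\psi^{j}$ to $\psi^{j+1}$. This isotopy is built from three ingredients:
\begin{itemize}
\item half-twists on the preimage of an interval containing two adjacent saddles;
\item the saddle-height exchanges of Lemma~\ref{sub1};
\item exchanges of the heights of maxima.
\end{itemize}
By construction, $\Psi^{j}$ restricts to $\psi^{j}$ and to $\psi^{j+1}$ on the two ends. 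So the $\Psi^{j}$ glue along the $F_{j}$ to a smooth map $h_{1}\colon W \to \mathbb{R}\times[0,1]\subset\mathbb{R}^{2}$. Gluing is smooth provided each isotopy is taken to be stationary near its ends, which we arrange by reparametrising $t$.

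Third, we read off the singular set. At each level $t$, the map $h_{1}$ has the form $(x,t)\mapsto(\psi_{t}(x),t)$ with $\psi_{t}$ Morse. Hence all singular points are folds, and there are no cusps. The definite folds trace the maxima, giving $n+1$ arcs that are monotone in $t$, i.e.\ an $(n+1)$-braid. The indefinite folds trace the saddles, giving $n$ monotone arcs. The main step to check is that the saddle arcs realise $\sigma_{z_{j}}^{m_{j}}$ in $W_{j}$. For this, the $m_{j}$ half-twists on the component containing $v_{z_{j}}, v_{z_{j}+1}$ rotate these two saddles around each other $m_{j}$ times, producing the factor $\sigma_{z_{j}}^{m_{j}}$. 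The auxiliary height exchanges (the reorderings of Figure~\ref{reorder} and their inverses, and Lemma~\ref{sub1}) are cancelled by their inverses, or only permute heights. Since heights are a coordinate in the target, not in $\mathbb{D}^{2}$, these exchanges do not change the isotopy class of the saddle strands in $W$. Concatenating over $j$ shows that $S_{1}(h_{1})$ is isotopic to $b_{1}$.

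I expect the main obstacle to be the odd case. There the half-twists leave the saddles and maxima in a configuration that differs from $\psi^{j+1}$, and the extra Lemma~\ref{sub1} move together with the maxima switch (Figures~\ref{switch}, \ref{swichne1}) must return the end function exactly to the standard one in Figure~\ref{bound.mu}. I would handle this by citing the figures from the proof of Theorem~\ref{main1}, which record exactly this restoration. The statement itself does not ask for stability of $h_{1}$, so the global normal-crossing conditions need no separate verification.
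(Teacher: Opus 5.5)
Your proposal is correct and is essentially the paper's proof, which consists only of the instruction to run the construction of Theorem~\ref{main1} up to the map $N_{1} \to R_{1}$; you also make explicit three things the paper leaves implicit: the smooth gluing, the fold/no-cusp check via the family $(x,t)\mapsto(\psi_{t}(x),t)$ of Morse functions, and the Euler characteristic count giving $n+1$ maxima. The one soft spot is your reason that the height exchanges do not braid the saddle strands: what actually makes this work is that these exchanges can be realised with the critical points kept fixed in $\mathbb{D}^{2}$ (a rearrangement of critical values), not merely that heights are a target coordinate, though the paper itself relies on its figures for this point just as you do.
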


This lemma means that we can construct a smooth map $h_{1}$ whose set of indefinite fold points is isotopic to a given link in $W$. However, the set of definite fold points of $h_{1}$ is not controlled in the construction of $h_{1}$. See Figure~\ref{mu}.

\begin{proof}
    In the proof of Theorem~\ref{main1}, following the construction up to obtaining $N_{1} \to R_{1}$, we get a desired stable map, where the notation is as in the proof of Theorem~\ref{main1}.
\end{proof}

\begin{figure}[htbp]
        \setlength\unitlength{1truecm}
        \begin{picture}(15,7.5)(0,1)
            \put(0,0){\includegraphics[width=1\textwidth,clip]{Morse1.pdf}}
            \put(3,6){$\cdots$}
            \put(8.8,7.4){$\cdots$}

            \put(1,5.7){$v_{1}$}
            \put(2,4.6){$v_{2}$}
            \put(3,4.3){$v_{n-1}$}
            \put(4.3,3.3){$v_{n}$}

                \put(6,4){$\longrightarrow$}
                
            \put(7,6.2){$v_{1}$}
            \put(7,5){$v_{2}$}
            \put(6.7,4.5){$v_{n-1}$}
            \put(7,3.5){$v_{n}$}
            
                \put(10.5,4){$\longrightarrow$}

            \put(11.5,6.2){$v_{1}$}
            \put(11.5,5){$v_{2}$}
            \put(11.6,4.5){$\vdots$}
            \put(11.3,4.2){$v_{n-1}$}
            \put(11.5,3.5){$v_{n}$}
            
        \end{picture}
        \caption{A smooth map $h_{ \varepsilon } | _{\mathbb{D}^{2} \times \varepsilon }$ for $\varepsilon = 0,1$. }
        \label{bound.mu}
    \end{figure}

    \begin{figure}[htbp]
        \setlength\unitlength{1truecm}
        \begin{picture}(15,7)(0,1.5)
            \put(0,0){\includegraphics[width=1\textwidth,clip]{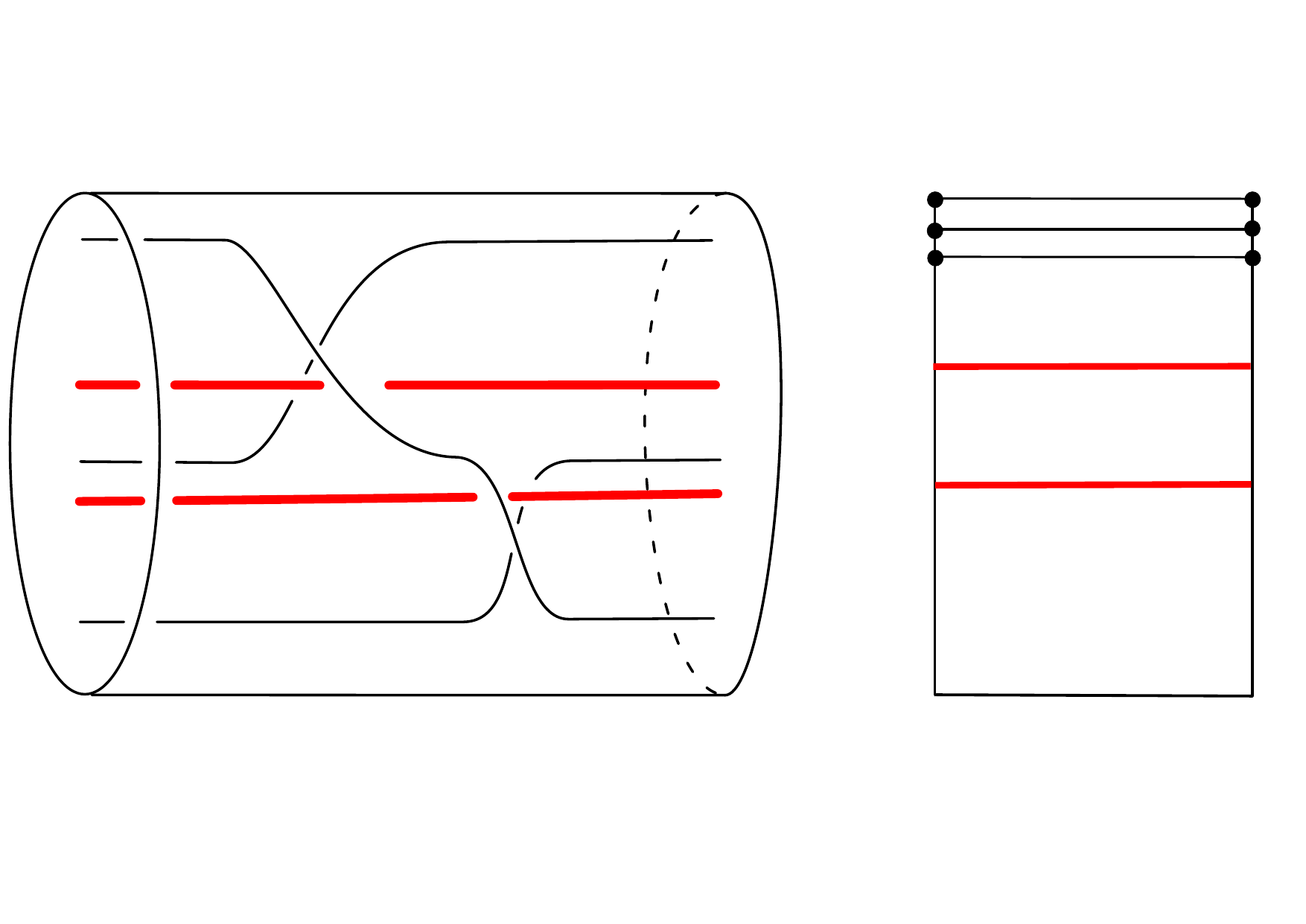}}
            \put(8,4.5){$\longrightarrow$}
        \end{picture}
        \caption{A smooth map $h_{1}$ for the trivial braid $b_{1}$ when $n=2$. In this case, the set of definite fold points represents the braid $\sigma_{1} \sigma_{2}$.}
        \label{mu}
    \end{figure}

From the construction of the stable map $S^{3} \to \mathbb{R}^{2}$ in \cite{K2025ar}, we obtain the following.

\begin{lemma}
    Let $W$ be a solid cylinder $\mathbb{D}^{2} \times [0,1]$ and $b_{0} \cup b_{1}$ a braid in $W$. Assume that $b_{0}$ is any $n$-braid and $b_{1}$ is the trivial $(n-1)$-braid in $W$. Then, there exists a smooth map $h_{0} : W \to \mathbb{R}^{2}$ satisfying the following. The set of the definite fold points of $h_{0}$ is isotopic to $b_{0}$, and the set of the indefinite fold points of $h_{0}$ is isotopic to $b_{1}$. Moreover, $h_{0} |_{ \mathbb{D}^{2} \times \{ \varepsilon \} }$ is a Morse function determined by a height function as shown in Figure~\ref{bound.mu} for $\varepsilon=0,1$.
    \label{sub3}
\end{lemma}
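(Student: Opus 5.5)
We will extract $h_{0}$ from the construction of \cite[Theorem 1.1]{K2025ar}, in the same way that Lemma~\ref{sub2} was extracted from the proof of Theorem~\ref{main1}.

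First, write $b_{0}$ as a braid word $\sigma_{z_{1}}^{m_{1}}\cdots\sigma_{z_{l}}^{m_{l}}$ in the $n$-strand braid group. Cut $W$ into solid cylinders $W_{1},\dots,W_{l}$ so that $b_{0}\cap W_{i}$ realizes $\sigma_{z_{i}}^{m_{i}}$. On every slice $F_{i}$ take the height function of Figure~\ref{bound.mu}. Its $n$ local maxima are the points of $b_{0}$, and its $n-1$ saddles separate consecutive maxima. The fixed positions of these saddles give the trivial $(n-1)$-braid $b_{1}$.

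For each block $W_{i}$, build a map $W_{i}\to\mathbb{R}\times[i-1,i]$ as a one-parameter family of Morse functions. The two neighbouring maxima indexed by $z_{i}$, together with the saddle between them, lie in the component of the preimage of a left-open interval containing both maxima and the top value. We perform $m_{i}$ half-twists on that component. This braids the definite fold curves by $\sigma_{z_{i}}^{m_{i}}$. The saddle is carried along, but it returns to its original position, so the indefinite fold curves stay a trivial braid up to isotopy.

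If this neighbourhood is not topmost in the Reeb graph, first reorder saddle heights by repeated use of Lemma~\ref{sub1}, as in the case $z_{i}\neq 1$ in the proof of Theorem~\ref{main1}, and afterwards undo the reordering. Lemma~\ref{sub1} exchanges heights of adjacent saddles by an isotopy of Morse functions. It creates only crossings of fold images and type $\mathrm{I\hspace{-1.2pt}I^{2}}$ fibers, never cusps, so these moves introduce no braiding of the saddle strands themselves. When $m_{i}$ is odd, the two maxima end up exchanged. We then restore the boundary function of Figure~\ref{bound.mu} by switching maxima heights as in Figure~\ref{switch}.

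Finally, concatenate the blocks along the common slices $F_{2},\dots,F_{l}$. The boundary restrictions on $\mathbb{D}^{2}\times\{0,1\}$ are then exactly the prescribed Morse function.

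\textbf{Main obstacle.} The hard step is the odd $m_{i}$ case with the saddle reorderings. There one must check that the saddle strands are not braided nontrivially, so that $S_{1}(h_{0})$ is really isotopic to the trivial $(n-1)$-braid $b_{1}$ and not to some other braid. I would check this by tracking the saddle positions in $\mathbb{D}^{2}$ through each isotopy. The half-twist rotates the saddle in place, and the Lemma~\ref{sub1} moves change only heights, so each saddle path is isotopic rel endpoints to a vertical segment.
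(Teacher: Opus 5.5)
Your proposal is correct and follows the paper's route: the paper's proof simply restricts the construction of \cite[Theorem 1.1]{K2025ar} to the piece $N_{1}\to R_{1}$, and your block-by-block unpacking is a faithful description of that construction (half-twists on the superlevel component containing two adjacent maxima and one saddle, saddle reorderings via Lemma~\ref{sub1}, and a switch of maxima heights when the exponent is odd). Your resolution of the main obstacle is also right, but the reason is that each move either keeps the saddles fixed in $\mathbb{D}^{2}$ or rotates a subdisk containing only one saddle; the absence of cusps, which your third paragraph cites, does not by itself rule out braiding.
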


\begin{proof}
    In the proof of \cite[Section\,2, Theorem\,1.1]{K2025ar}, following the construction obtaining a smooth map $N_{1} \to R_{1}$, we get a stable map with the descried condition of definite fold points, where the notation is as in \cite{K2025ar}.
\end{proof}

\begin{proof}[Proof of Theorem~\ref{main2}]
In this proof, we follow the notation used in the proof of Theorem~\ref{main1}. Let $L_{0}$ and $L_{1}$ be links in $S^{3}$. We isotope $L_{0} \cup L_{1}$ so that $L_{0} \cup L_{1}$ is given as the closure of a braid $b$. Let $b_{0}$ (resp. $b_{1}$) be the subbraid of $b$ corrseponding to $L_{0}$ (resp. $L_{1}$), that is, $L_{0}$ (resp. $L_{1}$) is given as the closure of $b_{0}$ (resp. $b_{1}$). By applying stabilization to $b_{0}$ or $b_{1}$ if necessary, we may assume that $b_{0}$ is a $n$-braid and $b_{1}$ is a $(n-1)$-braid. Suppose that $b_{0}$ and $b_{1}$ are expressed as words $b_{0} = \sigma_{z_{1}}^{m_{1}} \sigma_{z_{2}}^{m_{2}} \cdots \sigma_{z_{l}}^{m_{l}} \quad \text{and} \quad b_{1} = \sigma_{w_{1}}^{n_{1}} \sigma_{w_{2}}^{n_{2}} \cdots \sigma_{w_{k}}^{n_{k}}$, where $1 \le z_{i} \le n-1$ with $z_{i-1} \neq z_{i}$, $1 \le w_{j} \le n-2$ with $w_{j-1} \neq w_{j}$, and the exponents are integers.

By using Lemma~\ref{sub2}, we obtain a smooth map $h_{1} : \mathbb{D}^{2} \times [0,1] \to \mathbb{R}^{2}$ such that the set of indefinite fold points of $h_{1}$ is isotopic to $b_{1}$. Note that the set of definite fold points of $h_{1}$ also is an $n$-braid in $\mathbb{D}^{2} \times [0,1]$. We denote the $n$-braid by $b'$, and $b'$ is represented by a braid word $\sigma_{x_{1}}^{y_{1}} \cdots \sigma_{x_{r}}^{y_{r}}$, where $x_{i} \ne x_{i+1}$ and $1 \le x_{i} \le n-1$. Let $(b')^{-1}$ be a inverse word $\sigma_{x_{r}}^{-y_{r}} \cdots \sigma_{x_{1}}^{-y_{1}}$. By using Lemma~\ref{sub3}, we construct a smooth map $h_{0} : \mathbb{D}^{2} \times [0,1] \to \mathbb{R}^{2}$ such that the set of definite fold points of $h_{0}$ is isotopic to $(b')^{-1}b_{0}$, and the set of indefinite fold points of $h_{0}$ is a trivial $(n-1)$-braid. From the construction of $h_{\varepsilon}$,  $h_{\varepsilon} |_{ \mathbb{D}^{2} \times \{ \varepsilon \} }$ is a Morse function determined by a height function as shown in Figure~\ref{bound.mu} for $\varepsilon=0,1$. We identify $h_{1} |_{\mathbb{D}^{2} \times \{ 1 \}}$ with $h_{0} |_{\mathbb{D}^{2} \times \{ 0 \}}$. Then, we obtain a smooth map $\mathbb{D}^{2} \times [0,1] \to \mathbb{R}^{2}$ such that the set of definite fold points is isotopic to $b_{0}$, and the set of indefinite fold points is isotopic to $b_{1}$.

We connect this map with a smooth map a solid cylinder $N_{2}$ into the region $R_{2}$ naturally induced from $h_{1} |_{\mathbb{D}^{2} \times \{ 0 \}}$ and $h_{0} |_{\mathbb{D}^{2} \times \{ 1 \}}$ by the product structure of $N_{2}$. As a result, a smooth map from a solid torus into an annulus. Further, connecting this smooth map on a solid torus and the natural projection $g_{2} : V_{2} \to E$, we finally obtain a smooth map $f : S^{3} \to A \cup E \subset \mathbb{R}^{2}$. By construction, this map $f$ has no cusp points, and its singular set consists of definite and indefinite fold points. Also, $f$ satisfies the global conditions (5) and (6). Moreover, the set of definite fold points forms a link isotopic to a given $L_{0}$, the set of indefinite fold points forms a link isotopic to a given $L_{1}$, and $f$ has no singular fibers of type $\mathrm{I\hspace{-1.2pt}I^{3}}$.

\end{proof}

\begin{example}

Let $L_{0}$ be the figure-eight knot and $L_{1}$ the trefoil knot in $S^{3}$.
In Figure~\ref{tre8}, it is illustrated a stable map $f \colon S^{3} \to \mathbb{R}^{2}$ such that $S_{0}(f)$ is isotopic to $L_{0}$ and $S_{1}(f)$ is isotopic to $L_{1}$. Here, $b_{i}$ are $3$-braids and $b'_{i}$ are $2$-braids for $i=1,2,3$, defined by $b_{1} = (\sigma_{1}\sigma_{2}\sigma_{1})^{3}$, $b_{2} = (\sigma_{1}^{-1}\sigma_{2}^{-1}\sigma_{1}^{-1})^{3}$, $b_{3} = (\sigma_{1}\sigma_{2}^{-1})^{2}$, and $b'_{1} = \sigma_{1}^{3}$, with $b'_{2}, b'_{3}$ being trivial $2$-braids.
In particular, their closures give $\widehat{b_{3}} = L_{0}$ and $\widehat{b'_{1}} = L_{1}$.

    \begin{figure}[htbp]
        \setlength\unitlength{1truecm}
        \begin{picture}(15,20)(0,0)
            \put(-4,0){\includegraphics[width=1\textwidth,clip]{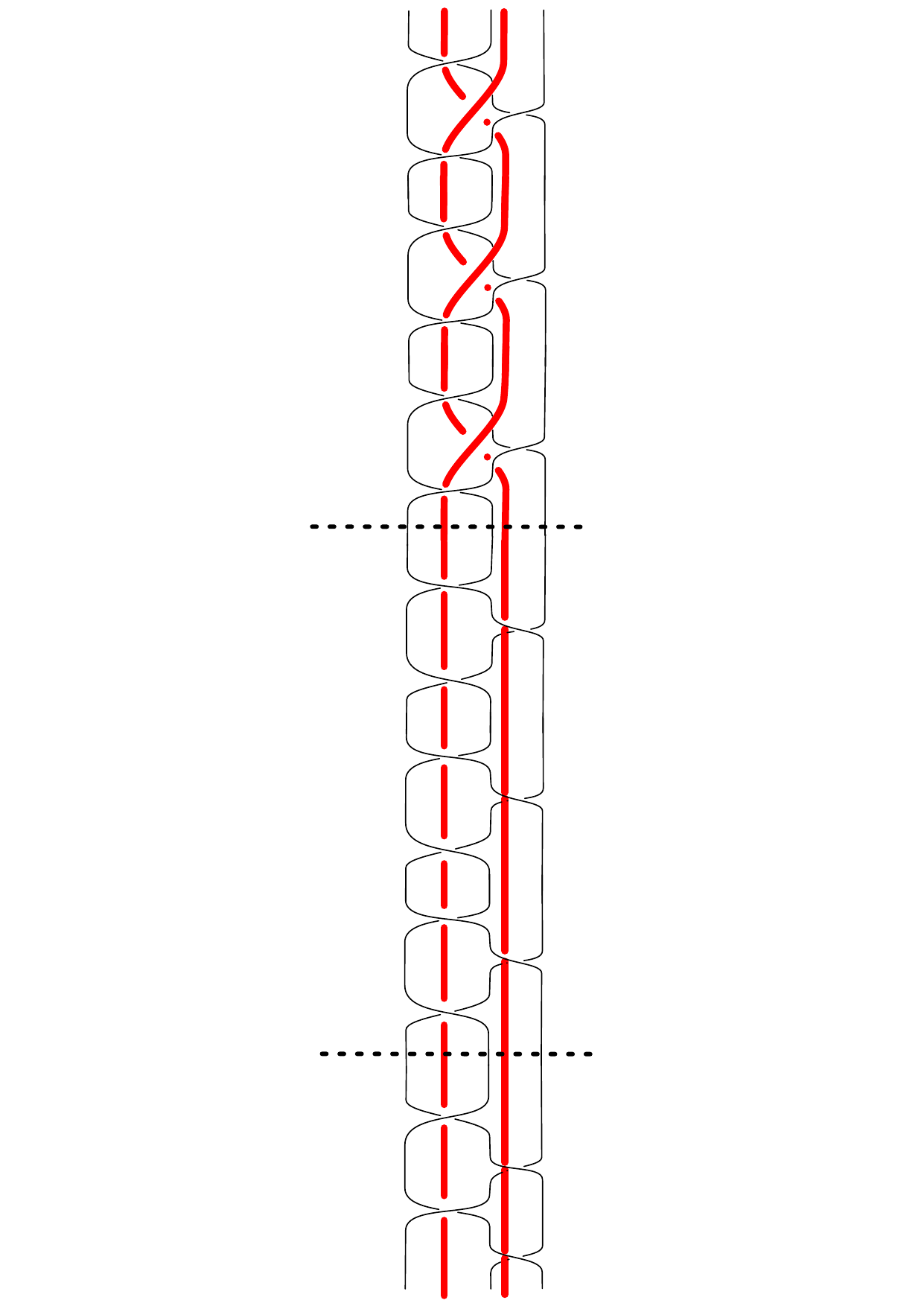}}
                    \put(0,14){$b_{1}$, $b'_{1}$}
                    \put(0,7){$b_{2}$, $b'_{2}$}
                    \put(0,1){$b_{3}$, $b'_{3}$}
                \put(4.5,9){$\longrightarrow$}
            \put(5,2){\includegraphics[width=0.8\textwidth,clip]{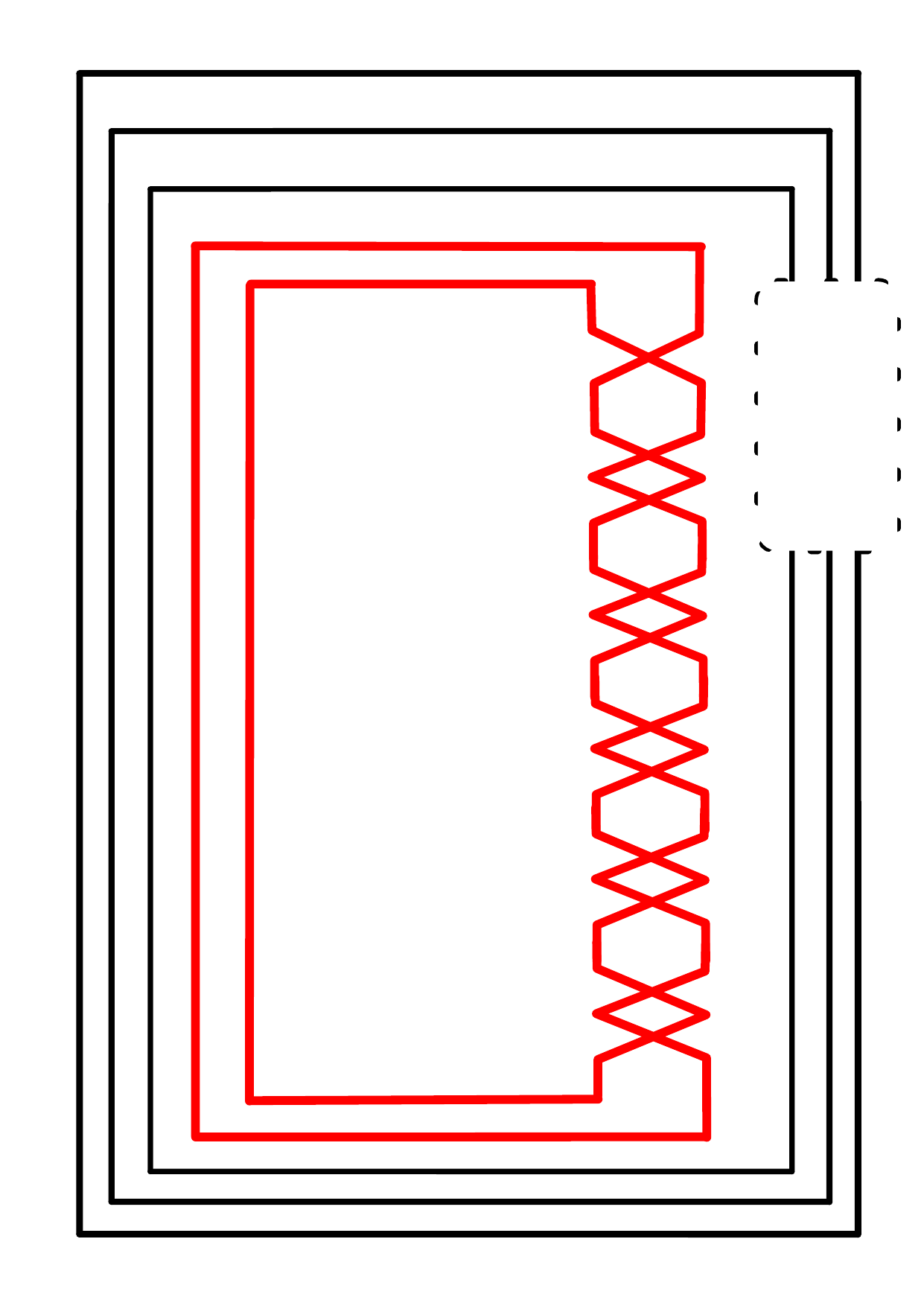}}
                \put(14,11.5){$B$}
        \end{picture}
        \caption{A stable map $f : S^{3} \to \mathbb{R}^{2}$ with $S_{0} (f)$ is isotopic to the figure-eight knot and $S_{1} (f)$ is isotopic to the trefoil knot.}
        \label{tre8}
    \end{figure}

    \begin{figure}[htbp]
        \setlength\unitlength{1truecm}
        \begin{picture}(15,10)(0,0)
            \put(3,0){\includegraphics[width=0.5\textwidth,clip]{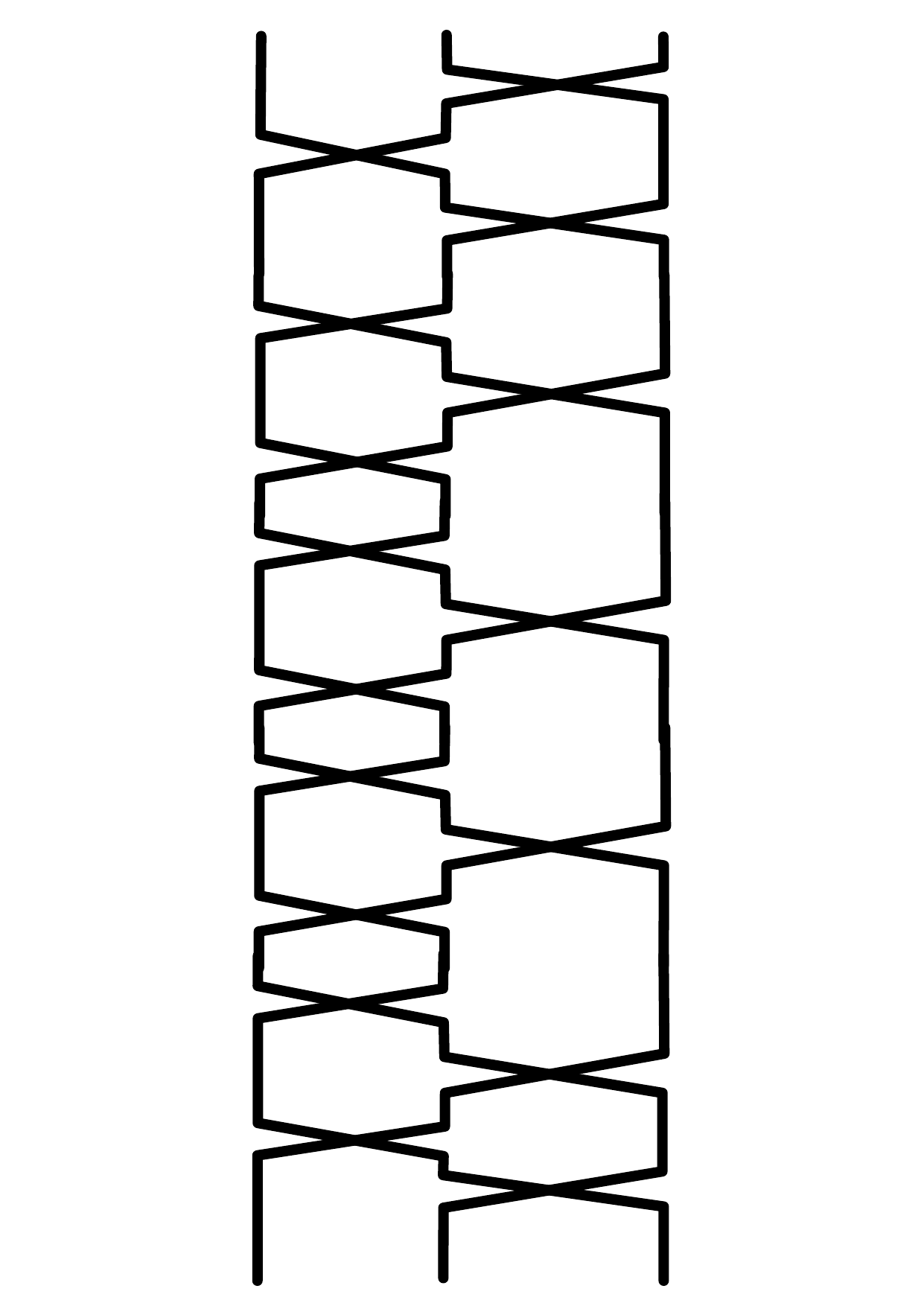}}
        \end{picture}
        \caption{Thr crossings in the box $B$.}
        \label{tre8tB}
    \end{figure}
\end{example}

\begin{proof}[Proof of Corollary~\ref{c3}]

Let $M$ be a closed orientable $3$-manifold and $L_{0}$, $L_{1}$ links in $M$. Suppose that $M$ is obtained from $S^{3}$ by integral surgery on the closure $\hat{B}$ of a pure braid $B$. Let $\tilde{L_{0}}$ (resp. $\tilde{L_{1}}$) be a link in $S^{3} \backslash \hat{B}$ which is isotopic to $L_{0}$ (resp. $L_{1}$) in $M$ after surgery on $\hat{B}$. By using \cite[Theorem 5.3]{Lamb-Rourke1997}, for any oriented link $L_{0}$, $L_{1}$ in $M$, there exists a braid $B_{0} \cup B_{1} \cup B$ in $S^{3}$ such that the closure of $B_{0}$ is isotopic to $\tilde{L_{0}}$, the closure of $B_{1}$ is isotopic to $\tilde{L_{1}}$. If necessary, by applying stabilization to $B_{0}$ or $B_{1}$, we may assume that the number of strands of $B_{0} \cup B$ is one more than that of $B_{1}$. By using our construction in Theorem~\ref{main2}, we obtain a stable map $f : S^{3} \to \mathbb{R}^{2}$ without cusp points such that $f$ has no singular fibers of type $\mathrm{I\hspace{-1.2pt}I^{3}}$, $S_{0} (f)$ is isotopic to $\hat{B_{0}} \cup \hat{B}$ and $S_{1} (f)$ is isotopic to $\hat{B_{1}}$.

Similarly to the proof of Theorem~\ref{c1}, by gluing $h_i$ to $f$, we obtain a smooth map from $M$ to $S^2$. Thus, we obtain a stable map $f : M \to S^{2}$ without cusp points such that $f$ has no singular fibers of type $\mathrm{I\hspace{-1.2pt}I^{3}}$, $S_{0} (f)$ is isotopic to $\hat{B_{0}}$ and $S_{1} (f)$ is isotopic to $\hat{B_{1}}$.

\end{proof}


\section*{Acknowledgements}
The author would like to express my sincere gratitude to my supervisor, Professor Kazuhiro Ichihara, for his invaluable guidance, continuous encouragement, and insightful suggestions throughout this work.



\providecommand{\bysame}{\leavevmode\hbox to3em{\hrulefill}\thinspace}
\providecommand{\MR}{\relax\ifhmode\unskip\space\fi MR }
\providecommand{\MRhref}[2]{%
  \href{http://www.ams.org/mathscinet-getitem?mr=#1}{#2}
}
\providecommand{\href}[2]{#2}

\end{document}